\title{Smallest non-cyclic quotients of braid and mapping class groups}
\author{Sudipta Kolay}
\address{School of Mathematics \\ Georgia Institute of Technology\\Atlanta, GA 30332, USA}
\email{skolay3@gatech.edu}
\date{}
\theoremstyle{plain}
\newtheorem{thm}{Theorem}
\newtheorem{cor}[thm]{Corollary}
\newtheorem{lem}[thm]{Lemma}
\newtheorem{claim}[thm]{Claim}
\theoremstyle{definition}
\theoremstyle{remark}
\newtheorem{rem}[thm]{Remark}
\begin{document}

\begin{abstract}
    We show that the smallest non-cyclic quotients of braid groups are symmetric groups, proving a conjecture of Margalit. Moreover we recover results of Artin and Lin about the classification of homomorphisms from braid groups on $n$ strands to symmetric groups on $k$ letters, where $k$ is at most $n$. Unlike the original proofs, our method does not use the Bertrand-Chebyshev theorem, answering a question of Artin. Similarly for mapping class group of closed orientable surfaces,  the smallest non-cyclic quotient is given by the mod two reduction of the symplectic representation. We 
    provide an elementary proof of this result, originally due to  Kielak--Pierro, which proves a conjecture of Zimmermann.
\end{abstract}

\maketitle

\section{Introduction}
The goal of this paper is to show that, with some obvious exceptions, the smallest non-cyclic quotients of the braid  and mapping class groups, are given by natural projections\footnote{See the next section for details.} $\pi:B_n\rightarrow S_n$ (forgetful map) and $\Phi:\mathrm{Mod}(\Sigma_{g})\rightarrow \mathrm{Sp}(2g,\mathbb{Z}_2)$ (mod two reduction of the symplectic representation). We begin by stating our main result for the Artin braid groups $B_n$.

\begin{thm}\label{A} Suppose $n=3$ or $n\geq 5$. If $G$ is a non-cyclic quotient of $B_n$, then either $|G|> |S_n|=n!$, or $G$ is isomorphic to $S_n$. Moreover, in the latter case the quotient map $B_n\rightarrow G$ is obtained by postcomposing the natural map $\pi$ with an automorphism of $S_n$.
\end{thm}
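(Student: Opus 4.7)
Let $\phi:B_n\twoheadrightarrow G$ be the given surjection, and set $a_i:=\phi(\sigma_i)$. Because the standard generators of $B_n$ are pairwise conjugate (via the braid relations), the elements $a_i$ share a common order $k$, and since $H_1(B_n;\mathbb{Z})\cong\mathbb{Z}$ forces every abelian quotient of $B_n$ to be cyclic, $G$ is non-abelian and $k\geq 2$. The plan is to dichotomize on $k$.

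If $k=2$, then $\phi$ factors through $B_n/\langle\!\langle\sigma_1^2\rangle\!\rangle\cong S_n$, so $G$ is a non-cyclic quotient of $S_n$. For $n=3$ or $n\geq5$, the normal-subgroup lattice of $S_n$ is $1\triangleleft A_n\triangleleft S_n$, and the only non-cyclic quotient of $S_n$ is therefore $S_n$ itself. Since every surjection $S_n\twoheadrightarrow S_n$ is an automorphism, this already gives the second clause of the theorem in this case.

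If $k\geq 3$, the goal is to show $|G|>n!$. My approach is to analyze the normal subgroup $N:=\phi(P_n)\triangleleft G$, where $P_n$ is the pure braid group. Since $\sigma_i^2\in P_n$ and $k\geq 3$ makes $\phi(\sigma_i^2)$ nontrivial, $N$ is a nontrivial normal subgroup of $G$, and the quotient $G/N$ factors through $B_n/P_n\cong S_n$; hence $G/N\in\{1,\mathbb{Z}/2,S_n\}$ by the same analysis as in the $k=2$ case. The subcase $G/N\cong S_n$ immediately yields $|G|\geq 2\cdot n!>n!$, so it remains to handle the two subcases $G/N\in\{1,\mathbb{Z}/2\}$, where I plan to bound $|N|$ from below by analyzing the abelian subgroup of $N$ generated by the commuting family $\{\phi(\sigma_i^2):1\leq i\leq n-1,\ i\text{ odd}\}$, together with the $S_n$-action on $P_n^{\mathrm{ab}}\cong\mathbb{Z}^{\binom{n}{2}}$ by permutation of the $\binom{n}{2}$ unordered pairs.

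The main obstacle is this last lower bound on $|N|$: exhibiting $|N|>n!/2$ without recourse to Bertrand--Chebyshev. The key structural input, which should replace the prime-counting estimate of Artin's argument, is the decomposition of $P_n^{\mathrm{ab}}$ into its irreducible $S_n$-sub-representations; the hypothesis $k\geq 3$ prevents the induced map $P_n^{\mathrm{ab}}\to N^{\mathrm{ab}}$ from vanishing on any $\sigma_i^2$, which should force the image to contain a large $S_n$-sub-representation and hence push $|N|$ past $n!/2$. If this representation-theoretic estimate is too weak in low rank, I would fall back on a direct bootstrapping argument: restrict $\phi$ to the copy $B_{n-1}=\langle\sigma_1,\ldots,\sigma_{n-2}\rangle$ inside $B_n$ and observe that if $\phi(B_{n-1})$ is cyclic, then the commutation $a_{n-1}a_1=a_1a_{n-1}$ (valid as soon as $n\geq 4$) together with the braid relation $a_{n-1}a_{n-2}a_{n-1}=a_{n-2}a_{n-1}a_{n-2}$ collapses $G$ to a cyclic group, contradiction.
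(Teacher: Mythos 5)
Your handling of the case $k=2$ is correct and coincides with the endgame of the paper's argument (once $\phi(\sigma_i^2)=1$, the map factors through $S_n$, whose only non-cyclic quotient is $S_n$ itself). The genuine gap is the case $k\geq 3$, which is where all the content of the theorem lies, and the lower bound $|N|>n!/2$ that your plan hinges on is neither proved nor provable by the tools you name. First, $\phi(\sigma_i^2)\neq 1$ in $N$ does not imply its image in $N^{\mathrm{ab}}$ is nontrivial (it may lie in $[N,N]$). Second, even granting nonvanishing, an $S_n$-equivariant quotient of $P_n^{\mathrm{ab}}\cong\mathbb{Z}^{\binom{n}{2}}$ in which every generator $A_{ij}$ survives can still be cyclic: the map sending every $A_{ij}$ to the same generator of $\mathbb{Z}/m$ is $S_n$-equivariant, so the decomposition of the permutation module into irreducibles forces no large subrepresentation into the image and yields no bound remotely near $n!/2$. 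Likewise the commuting family $\{\phi(\sigma_i^2):i\ \mathrm{odd}\}$ may consist of a single repeated element, so it bounds $|N|$ only by the order of $\phi(\sigma_1^2)$. The possibility you must exclude is precisely that the $\binom{n}{2}$ conjugates of $\phi(\sigma_1)$ (equivalently of its square) collapse to very few elements, and that exclusion is a theorem about quotients of $B_n$, not a consequence of the $S_n$-module structure of $P_n^{\mathrm{ab}}$.

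Your fallback observation is correct as far as it goes --- if $\phi(B_{n-1})$ were cyclic then far commutation plus the braid relation would make $G$ cyclic --- but it only shows the restriction is non-cyclic; by itself it produces no numerical bound. Turning it into one requires an induction on $n$ combined with an orbit--stabilizer count, which is exactly the paper's route: Lemma~\ref{lemA} shows that in any non-cyclic quotient the $\binom{n}{2}$ classes $\overline{\rho_{i,j}}$ of the Birman--Ko--Lee generators are pairwise distinct, so the conjugacy class of $\overline{\sigma_1}$ has size at least $\binom{n}{2}$; its centralizer contains the image of $\langle\sigma_1\rangle\times\langle\sigma_3,\dots,\sigma_{n-1}\rangle$, and applying the inductive hypothesis to the image of this copy of $B_{n-2}$ modulo its center gives $|G|\geq\binom{n}{2}\,|M|\,(n-2)!=n!\,|M|/2$, where $M=\langle\overline{\sigma_1}\rangle$; hence $|G|\leq n!$ forces $|M|=2$, i.e.\ your case $k=2$, after which the factorization through $S_n$ finishes the proof (with base cases $n=3$ from Claim~\ref{c34} and $n=6$ treated separately). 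To repair your proposal you would need to supply both the distinctness statement (or an equivalent totally-symmetric-set argument) and the inductive centralizer bound; the representation-theoretic estimate on $N^{\mathrm{ab}}$ cannot substitute for them.
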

\noindent 

There are no non-cyclic quotients of $B_n$ for $n\leq 2$, and for $n=4$  the smallest non-cyclic quotient is $S_3$, which is proved in Claim~\ref{c34} of Section~\ref{back}. Hence the hypothesis $n=3$ or $n\geq 5$ is necessary in the theorem above.

The first statement of this theorem proves a conjecture of Margalit \cite{chudnovsky2020finite,scherich2020finite}, stating that the smallest non-cyclic quotient of $B_n$ is $S_n$ for $n\geq 5$. For the non-trivial cases $n \in \{5, 6\}$, this was first proved by Caplinger-Kordek~\cite{caplinger2020small}, and several  recent papers \cite{chudnovsky2020finite,caplinger2020small,scherich2020finite} prove lower bounds for the order of non-cyclic quotients of braid groups, using totally symmetric sets~\cite[Section 2]{first_TSS}, towards proving Margalit's conjecture. Our work builds further upon the idea of totally symmetric sets; see the discussion after Lemma~\ref{lemA}.

Since the automorphisms of symmetric groups are well understood, the second statement in the theorem above immediately implies for $n\neq 4$, the characterization of non-cyclic\footnote{By this we mean the image of the homomorphism is not cyclic.} homomorphisms from $B_n\rightarrow S_k$, with $k\leq n$, originally due to Artin~\cite{artinbrper} for  $k=n$ (and transitive homomorphisms) and improved by Lin~\cite[Theorem 3.9]{lin2004braids} for the remaining cases.
\begin{cor}\label{ALc}
For $n\geq 3$, and $n\neq 4,6$, all non-cyclic homomorphisms  $f:B_n \rightarrow S_n$ are conjugate to the standard projection $\pi$. Also, the only exceptional (up to conjugation) homomorphism $f:B_6\rightarrow S_6$ comes from composing $\pi$ with the only non-trivial (up to conjugation) outer automorphism of $S_6$ defined by
$ (12)\mapsto (1,2)(3,4)(5,6)$ and $(1,2,3,4,5,6) \mapsto (1,2,3)(4,5)$.

\end{cor}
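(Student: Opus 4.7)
\begin{sop}
The plan is to deduce the corollary directly from Theorem~\ref{A} and then invoke the classical description of $\mathrm{Aut}(S_n)$. Let $f : B_n \to S_n$ be a non-cyclic homomorphism and set $H = f(B_n)$. Then $H \leq S_n$ is a non-cyclic quotient of $B_n$ with $|H| \leq n!$. Since the corollary excludes $n=4$, we have $n=3$ or $n\geq 5$, so Theorem~\ref{A} rules out $|H|>n!$ and forces $H \cong S_n$. Hence $f$ is surjective and, by the second half of Theorem~\ref{A}, factors as $f = \alpha \circ \pi$ for some $\alpha \in \mathrm{Aut}(S_n)$.

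Next I would dispatch the case $n \geq 3$, $n \neq 6$. By the classical theorem of H\"older, $\mathrm{Out}(S_n)$ is trivial in this range, so $\alpha$ is conjugation by some $\sigma \in S_n$, whence $f$ is conjugate to $\pi$. For $n=6$, one has $\mathrm{Out}(S_6) \cong \mathbb{Z}/2$, so modulo inner automorphisms $\alpha$ is either trivial or lies in the unique non-trivial outer class; in the former subcase $f$ is again conjugate to $\pi$, while in the latter I may, after postcomposition with an inner automorphism, replace $\alpha$ by any fixed representative $\alpha_0$ of the outer class. The displayed map in the statement of the corollary serves as such an $\alpha_0$: it sends the transposition $(1,2)$ to a product of three disjoint transpositions and the $6$-cycle $(1,2,3,4,5,6)$ to a product of a disjoint $3$-cycle and $2$-cycle. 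Since inner automorphisms preserve cycle type, $\alpha_0$ is genuinely outer and the resulting homomorphism is not conjugate to $\pi$, giving exactly one extra conjugacy class as claimed.

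The main obstacle has been entirely absorbed into Theorem~\ref{A}: once the image is forced to be $S_n$ and the map identified as $\pi$ postcomposed with an automorphism, the corollary follows from the structure of $\mathrm{Aut}(S_n)$. The only genuine subtlety is remembering H\"older's exception at $n=6$ and performing the cycle-type check that the displayed automorphism is outer; I would also need to verify that the formulas on $(1,2)$ and $(1,2,3,4,5,6)$ in the statement do extend consistently to an automorphism of $S_6$, which is standard and amounts to checking the defining relations of $S_6$ on the indicated images.
\end{sop}
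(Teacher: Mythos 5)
Your proposal matches the paper's proof essentially step for step: deduce from Theorem~\ref{A} that $f$ is surjective onto $S_n$ and factors as an automorphism composed with $\pi$, then invoke H\"older's classification of $\mathrm{Aut}(S_n)$ to handle $n\neq 6$ and the single outer class for $n=6$. The only thing you add is the (correct but routine) cycle-type check that the displayed automorphism is outer, which the paper leaves implicit.
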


 Artin noted that his proof in \cite{artinbrper}
\say{uses the existence of a prime between $\frac{n}{2}$ and $n - 2$ for $n > 7$ but it would be preferable if a proof could be found that does not make use of this fact}. This fact, known as the Bertrand--Chebyshev theorem~\cite{Chebyshevbp}, is also crucial for Lin's proof of the above result ~\cite[Theorem 3.9]{lin2004braids}. Our proof here does not use this fact (and to the best of our knowledge, this is the first such proof).

\begin{rem}[Exceptional case $n=4$, Artin~\cite{artinbrper}]\label{re4} For completeness, we will record here the exceptional non-cyclic homomorphisms (up to conjugations) from $B_4$ to $S_k$ with $k\leq 4$. Let $\sigma_1, \sigma_2, \sigma_3$ denote the Artin generators of $B_4$, and let $\alpha=\sigma_3\sigma_2\sigma_1$. We see that $B_4$ is generated by $\sigma_1$ and $\alpha$.

\begin{enumerate}
    \item $f_1:B_4\rightarrow S_4$ defined by $\sigma_1\mapsto (1,2,3,4)$, $\alpha\mapsto (1,2)$.
    \item $f_2:B_4\rightarrow S_4$ defined by $\sigma_1\mapsto (1,3,2,4)$, $\alpha\mapsto (1,2,3,4)$.
       \item $f_3:B_4\rightarrow A_4 \subset S_4$ defined by $\sigma_1\mapsto (1,2,3)$, $\alpha\mapsto (1,2)(3,4)$. (Here $A_4$ denotes alternating group on four letters, which uniquely embeds in $S_4$)
         \item $f_4:B_4\rightarrow S_3 (\subset S_4$) defined by $\sigma_1\mapsto (1,2)$, $\alpha\mapsto (1,3)$.
\end{enumerate}
\end{rem}

Our main result for mapping class groups $\mathrm{Mod}(\Sigma_{g})$ of closed orientable surfaces parallels Theorem~\ref{A}, and is essentially the same as the result of Kielak--Pierro \cite{Kielak2017OnTS}, using other methods.

\begin{thm}  \label{C}
Let $g\geq 1$. For any non-cyclic quotient $H$ of $\mathrm{Mod}(\Sigma_{g})$, either $|H|>|\mathrm{Sp}(2g,\mathbb{Z}_2)|$, or $H$ is isomorphic to $\mathrm{Sp}(2g,\mathbb{Z}_2)$. Moreover, in the latter case the quotient map $\mathrm{Mod}(\Sigma_{g})\rightarrow H$ is obtained by postcomposing $\Phi$ with an automorphism of $\mathrm{Sp}(2g,\mathbb{Z}_2)$.
\end{thm}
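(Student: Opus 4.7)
The plan is to adapt the totally symmetric set (TSS) machinery used in the proof of Theorem~\ref{A} for braid groups, replacing braid half-twists by Dehn twists. Let $f:\mathrm{Mod}(\Sigma_{g})\twoheadrightarrow H$ be a quotient with $H$ non-cyclic.

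First I would exhibit a large TSS of Dehn twists in $\mathrm{Mod}(\Sigma_g)$. Take a configuration of $N$ pairwise disjoint non-separating simple closed curves $c_1,\ldots, c_N$ on $\Sigma_g$ whose isotopy classes can be permuted arbitrarily by homeomorphisms of $\Sigma_g$; the Dehn twists $T_{c_i}$ then commute pairwise and are symmetrically conjugated, forming a TSS. Such configurations are available for $N$ of order $g$, using for instance a symmetric embedding of a planar subsurface, or one curve per handle in a connected-sum decomposition of $\Sigma_g$. Since all Dehn twists about non-separating curves are conjugate and together generate $\mathrm{Mod}(\Sigma_g)$, $f$ cannot collapse the TSS to a single element without forcing $H$ cyclic; hence the key TSS lemma (as in the braid group case) yields a TSS of size $N$ inside $H$.

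Second, combining the structural lower bounds on finite groups carrying a TSS of size $N$ with the standard relations between Dehn twists in $\mathrm{Mod}(\Sigma_g)$ (commutation for disjoint curves, braid relations for curves with intersection number one, and the chain and lantern relations) should force $|H|\geq |\mathrm{Sp}(2g,\mathbb{Z}_2)|$. The bound is sharp via the mod-two symplectic representation $\Phi$, under which each $T_{c_i}$ becomes a transvection. For the equality case, I would identify the images $f(T_{c_i})$ with transvections in $\mathrm{Sp}(2g,\mathbb{Z}_2)$, arguing that such involutions, subject to the inherited braid/commutation pattern, essentially determine the target group up to isomorphism; this would force $f$ to factor through $\Phi$ up to an automorphism of $\mathrm{Sp}(2g,\mathbb{Z}_2)$. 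The exceptional low-genus cases $g=1,2$ (where $\mathrm{Sp}(2g,\mathbb{Z}_2)$ is not simple, with $\mathrm{Sp}(2,\mathbb{Z}_2)\cong S_3$ and $\mathrm{Sp}(4,\mathbb{Z}_2)\cong S_6$) would need separate direct treatment, the $g=1$ case reducing to an analysis of finite quotients of $\mathrm{SL}(2,\mathbb{Z})$.

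The main obstacle is the identification in the equality case: forcing the images of Dehn twists under $f$ to be involutions, so that their squares lie in $\ker f$ and $f$ descends to the quotient of $\mathrm{Mod}(\Sigma_g)$ by the normal closure of $T_c^2$, namely $\mathrm{Sp}(2g,\mathbb{Z}_2)$. The delicate point is finding a mapping class group relation---most likely a chain or a lantern relation---whose image in $H$, together with the rigidity coming from the TSS, forces $T_{c_i}^2 \in \ker f$ without sacrificing the size bound on $H$.
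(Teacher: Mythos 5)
Your proposal has a genuine gap at its central quantitative step. A collection of pairwise disjoint, arbitrarily permutable nonseparating curves on $\Sigma_g$ has size only linear in $g$ (at most $3g-3$, and for configurations that can be permuted arbitrarily essentially $g$), and the known lower bounds for a finite group containing a totally symmetric set of size $N$ are of order $2^{N-1}N!$, i.e.\ roughly $2^{O(g\log g)}$. But $|\mathrm{Sp}(2g,\mathbb{Z}_2)|$ grows like $2^{2g^2+g}$, so no bound coming from the size of such a TSS can reach the required threshold; the clause ``should force $|H|\geq|\mathrm{Sp}(2g,\mathbb{Z}_2)|$'' conceals exactly the part of the argument that is missing (this is the same reason the earlier TSS-based papers on braid groups obtained only non-sharp bounds). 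The paper's route is different: Lemma~\ref{lemB} shows that under any non-cyclic quotient the $2^{2g}-1$ twists $T_v$ about curves realizing every nonzero mod-two homology class have distinct images (these curves are far from disjoint, so this is not a TSS of your kind), and then an induction on $g$ is run via the orbit--stabilizer theorem applied to the conjugation action on the ordered pair $(\overline{T_{\alpha_{e_1}}},\overline{T_{\alpha_{e_2}}})$: the orbit has size at least $2^{2g-1}(2^{2g}-1)$ by change of coordinates, the stabilizer contains the image $I$ of $\mathrm{Mod}(\Sigma_{g-1}^1)$, and the inductive hypothesis applied to $I/Z(I)$ gives $|H|\geq 2^{2g-1}(2^{2g}-1)\,|\mathrm{Sp}(2g-2,\mathbb{Z}_2)|=|\mathrm{Sp}(2g,\mathbb{Z}_2)|$. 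Your proposal contains no mechanism producing an exponentially large orbit or any inductive structure, so the bound is not established.

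The equality case is also not just ``delicate'' but unaddressed. Even if you knew every $f(T_{c})$ were an involution, that only shows $f$ factors through $\mathrm{Mod}(\Sigma_g)$ modulo the normal closure of $T_c^2$, and it is neither obvious nor argued in your sketch that this quotient is $\mathrm{Sp}(2g,\mathbb{Z}_2)$: the real issue is killing the Torelli group, which your outline never mentions. The paper handles this by observing that in the equality case $I\cong\mathrm{Sp}(2g-2,\mathbb{Z}_2)$ with trivial center, which forces separating twists and genus-one bounding pair maps into $\ker q$; by the Birman--Powell--Johnson normal generation of the Torelli group, $q$ then factors through $\mathrm{Sp}(2g,\mathbb{Z})$, and since some $\overline{T_v}$ has order two the kernel contains the squares of all transvections and hence the level-two congruence subgroup, so $q$ factors through $\mathrm{Sp}(2g,\mathbb{Z}_2)$ and is $\Phi$ up to an automorphism. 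Some concrete input of this type (Torelli generation plus a congruence-subgroup statement, or an equivalent) is indispensable, and finding a lantern or chain relation to substitute for it, as you suggest, is not a proof.
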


Zimmermann~\cite{Zimmermann2008ANO} proved that for $g\in \{3,4\}$, the smallest non-trivial\footnote{ For $g\geq 3$, $\mathrm{Mod}(\Sigma_{g})$ is perfect and therefore its smallest {non-trivial} and {non-cyclic} quotients are the same.} quotient of $\mathrm{Mod}(\Sigma_{g})$ is $\mathrm{Sp}(2g,\mathbb{Z}_g)$, and conjectured the same statement holds for arbitrary $g\geq 3$. This conjecture was first proved by Kielak--Pierro \cite{Kielak2017OnTS} using the classification of finite simple groups and representation theory of mapping class groups. Moreover, Kielak--Pierro proved the same result holds for quotients of $\mathrm{Mod}(\Sigma_{g}^b)$ where $b$ is the number of boundary components, and we further extend their result here by allowing punctures as well.

\begin{thm}\label{Cb}
 Let $g\geq 3$. The smallest non-trivial quotient of $\mathrm{Mod}(\Sigma_{g,n}^b)$ is $\mathrm{Sp}(2g,\mathbb{Z}_2)$ for $n\in\{0,1\}$, and $\mathbb{Z}_2$  for $n\geq 2$.  If we furthermore assume $n\geq 5$, any non-cyclic quotient of $\mathrm{Mod}(\Sigma_{g,n}^b)$ of smallest order is isomorphic to either $S_n$ or  $\mathrm{Sp}(2g,\mathbb{Z}_2)$ (depending on which group is smaller). Moreover, in any of the above cases, any epimorphism to a quotient of smallest order is the standard projection, postcomposed with an automorphism of the image.

\end{thm}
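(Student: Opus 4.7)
The plan is to prove Theorem~\ref{Cb} by three reductions: exhibit the small standard quotients; reduce to the no-boundary case using the central subgroup of boundary Dehn twists; and reduce to the closed case via the Birman exact sequences, at which point Theorem~\ref{C} (for the symplectic quotient) and Theorem~\ref{A} (for the $S_n$ quotient) finish. The exhibition is routine: capping and filling punctures yield $\mathrm{Mod}(\Sigma_{g,n}^b)\twoheadrightarrow \mathrm{Mod}(\Sigma_g)$, whose composition with $\Phi$ realizes $\mathrm{Sp}(2g,\mathbb{Z}_2)$ as a quotient; for $n\geq 2$ the permutation action on punctures gives a surjection onto $S_n$, and postcomposing with the sign yields $\mathbb{Z}_2$.

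For the reduction to $b=0$, let $\phi:\mathrm{Mod}(\Sigma_{g,n}^b)\twoheadrightarrow H$ be a minimal non-trivial (resp.\ non-cyclic) quotient. The kernel of capping is a central $Z\cong \mathbb{Z}^b$ generated by boundary Dehn twists, so $\phi(Z)\leq Z(H)$. If $\phi(Z)\neq 1$, consider $H/\phi(Z)$, which is a quotient of $\mathrm{Mod}(\Sigma_{g,n})$. If $H/\phi(Z)$ is still non-trivial (resp.\ non-cyclic), this contradicts minimality of $H$; otherwise $H$ is a central extension of a trivial or cyclic group by a quotient of $\mathbb{Z}^b$, hence abelian, and the known abelianization $H_1(\mathrm{Mod}(\Sigma_{g,n}^b))\in\{0,\mathbb{Z}_2\}$ for $g\geq 3$ forces $H\in\{1,\mathbb{Z}_2\}$; this is impossible in the non-cyclic case, and in the non-trivial case $H=\mathbb{Z}_2$ arises from the sign map, which factors through $\mathrm{Mod}(\Sigma_{g,n})$. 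Either way we may assume $\phi$ factors through $\mathrm{Mod}(\Sigma_{g,n})$.

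With $b=0$, set $N:=\phi(\mathrm{PMod}(\Sigma_{g,n}))\trianglelefteq H$ using $1\to\mathrm{PMod}(\Sigma_{g,n})\to\mathrm{Mod}(\Sigma_{g,n})\to S_n\to 1$. If $N=1$, then $\phi$ factors through $S_n$; for $n\geq 5$ the only non-cyclic quotient of $S_n$ (up to automorphism) is $S_n$ itself, yielding the $S_n$ case. If $N\neq 1$, we induct on $n$ via the Birman sequence $1\to\pi_1(\Sigma_{g,n-1})\to\mathrm{PMod}(\Sigma_{g,n})\to\mathrm{PMod}(\Sigma_{g,n-1})\to 1$, aiming to show that any sufficiently small non-trivial quotient of $\mathrm{PMod}(\Sigma_{g,n})$ kills the point-pushing subgroup, collapses to $\mathrm{PMod}(\Sigma_{g,n-1})$, and ultimately to $\mathrm{Mod}(\Sigma_g)$, where Theorem~\ref{C} identifies $H$ with $\mathrm{Sp}(2g,\mathbb{Z}_2)$. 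The uniqueness of epimorphisms up to automorphism of the image follows because the pure mapping class group and the level-two Torelli are characteristic subgroups, and the embedded braid subgroup $B_n\hookrightarrow \mathrm{Mod}(\Sigma_{g,n})$ supported on a disk containing all punctures lets us import Theorem~\ref{A}'s rigidity for the $S_n$ case.

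The main obstacle is the inductive step above: proving that the point-pushing subgroup $\pi_1(\Sigma_{g,n-1})$ dies in any sufficiently small non-trivial quotient of $\mathrm{PMod}(\Sigma_{g,n})$. I plan to attempt this by writing the standard generators of $\pi_1(\Sigma_{g,n-1})$, regarded inside $\mathrm{PMod}(\Sigma_{g,n})$, as products of commutators of Dehn twists via chain and lantern relations; their images in $H$ then land in $[H,H]$ and are controlled by the already-computed abelianization, forcing them to be trivial when $H$ is small.
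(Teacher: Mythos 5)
There is a genuine gap at the step you yourself flag as the main obstacle, and the mechanism you propose for it cannot work. You want to show that the point-pushing subgroup $\pi_1(\Sigma_{g,n-1})\subset\mathrm{PMod}(\Sigma_{g,n})$ dies in any sufficiently small non-trivial quotient $H$, by writing point-pushing generators as products of commutators of Dehn twists so that their images lie in $[H,H]$, and then invoking the abelianization. But membership in $[H,H]$ is no constraint at all: the abelianization only controls $H/[H,H]$, and the candidate minimal quotients are (nearly) perfect --- $\mathrm{Sp}(2g,\mathbb{Z}_2)$ is simple for $g\geq 3$ and $[S_n,S_n]=A_n$ --- so $[H,H]$ is essentially all of $H$ and ``landing in $[H,H]$'' forces nothing to be trivial. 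What you actually need is that these specific elements map to the identity in an \emph{arbitrary} quotient of small order, which is exactly the kind of rigidity statement you are trying to prove; the commutator/abelianization observation gives no leverage on it. A secondary problem is that your induction is set up for quotients of $\mathrm{PMod}(\Sigma_{g,n})$, whereas minimality is an assumption on the quotient of the full group $\mathrm{Mod}(\Sigma_{g,n}^b)$; the image $N=\phi(\mathrm{PMod})$ need not be a minimal (or even quantitatively small) quotient of $\mathrm{PMod}$, so ``sufficiently small'' is not available to the inductive hypothesis without further argument. The uniqueness claim via ``characteristic subgroups'' is also too vague to close the moreover statement.

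The paper avoids the Birman exact sequence entirely. After reducing to $b=0$ (your central boundary-twist reduction is fine and matches the paper), it considers the two commuting subgroups $\mathrm{Mod}(\Sigma_g^1)$ and $B_n\cong\mathrm{Mod}(\Sigma_{0,n}^1)$ supported on disjoint subsurfaces. If the restrictions of the quotient to both were cyclic, then by the smallest non-trivial quotient result (Theorem~\ref{D}) the image of $\mathrm{Mod}(\Sigma_g^1)$ would be trivial, all half twists would have a common image which is then central (every Dehn twist commutes with some half twist after a change of coordinates, and these elements generate), contradicting that a minimal non-cyclic quotient here must be non-abelian with trivial center. Hence one of the two restrictions is non-cyclic, and Theorems~\ref{A} and~\ref{D} give the order bound and identification. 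The uniqueness of the map in the $S_n$ case uses that $S_n$ is centerless together with the commuting copy of $\mathrm{Mod}(\Sigma_g^1)$ and conjugacy of Dehn twists to show $\mathrm{PMod}(\Sigma_{g,n})$ is in the kernel; in the $\mathrm{Sp}(2g,\mathbb{Z}_2)$ case it applies Theorem~\ref{C} to the restriction to $\mathrm{Mod}(\Sigma_g^1)$ and then pins down the images of the remaining Humphries generators near the punctures. If you want to salvage your route, you would need an honest argument that point-pushing dies in any quotient of at most the target order --- which is substantially harder than the disjoint-support argument the paper uses --- so I recommend restructuring along those lines rather than patching the commutator step.
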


As indicated, some of the results above were previously known, but our proofs are considerably easier. For example, we do not use the classification of finite simple groups or the Bertrand--Chebyshev theorem. We use an \emph{inductive orbit stabilizer method}, described in Section~\ref{IOS}, which should also be applicable in other settings. Our approach is similar to that of  Chudnovsky--Kordek--Li--Partin \cite{chudnovsky2020finite}, 
Caplinger--Kordek \cite{caplinger2020small}, and particularly Scherich--Verberne \cite{scherich2020finite}; in that we all consider some group actions of the quotient (of braid groups), and use the orbit stabilizer theorem to find a bound on the size of the quotient. The advantage of our approach is that we prove an optimal lower bound on orbit size (by looking at the corresponding orbit size in the candidate smallest quotient), and moreover using induction to find the stabilizer size. For the two families of groups we consider here, this not only gives us the optimal lower bounds for size of the smallest quotient at the numerical level, but we also obtain the smallest quotient group up to isomorphism, and moreover a characterization of all possible minimal quotient maps.

Let us note that if $G\rightarrow H$ and $H\rightarrow I$ are surjective group homomorphisms, and $I$ is smallest non-cyclic (respectively non-trivial) quotient of $G$, then $I$ is also the smallest non-cyclic (respectively non-trivial) quotient of $H$. Thus, an immediate consequence of Theorems~\ref{C} and ~\ref{Cb}  is the following result:
\begin{cor}
For  $g\geq 1$ (respectively for $g\geq 3$), $\mathrm{Sp}(2g,\mathbb{Z}_2)$ is the smallest non-cyclic (respectively non-trivial) quotient of $\mathrm{Sp}(2g,\mathbb{Z})$.
\end{cor}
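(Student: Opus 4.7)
The plan is to apply directly the factorization observation made in the paragraph immediately preceding the corollary. The key structural fact is that the mod-two symplectic representation $\Phi\colon\mathrm{Mod}(\Sigma_g)\to\mathrm{Sp}(2g,\mathbb{Z}_2)$ factors as the composition of the integral symplectic representation $\mathrm{Mod}(\Sigma_g)\twoheadrightarrow\mathrm{Sp}(2g,\mathbb{Z})$ with the reduction-mod-$2$ map $\mathrm{Sp}(2g,\mathbb{Z})\twoheadrightarrow\mathrm{Sp}(2g,\mathbb{Z}_2)$, and both arrows are surjective (the first is a classical theorem, the second holds because every symplectic matrix over $\mathbb{Z}_2$ lifts to one over $\mathbb{Z}$). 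Thus, setting $G=\mathrm{Mod}(\Sigma_g)$, $H=\mathrm{Sp}(2g,\mathbb{Z})$, and $I=\mathrm{Sp}(2g,\mathbb{Z}_2)$, we have surjections $G\twoheadrightarrow H\twoheadrightarrow I$, putting us exactly in the situation of the observation.

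For the non-cyclic case with $g\geq 1$, I would invoke Theorem~\ref{C}, which asserts that $I$ is the smallest non-cyclic quotient of $G$. The observation then immediately upgrades this to the statement that $I$ is the smallest non-cyclic quotient of $H$, giving the first half of the corollary.

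For the non-trivial case with $g\geq 3$, I would first recall (as noted in the footnote attached to Zimmermann's conjecture) that $\mathrm{Mod}(\Sigma_g)$ is perfect, so its smallest non-trivial and non-cyclic quotients coincide; hence Theorem~\ref{C} also identifies $I$ as the smallest non-trivial quotient of $G$. Any non-trivial quotient $H\twoheadrightarrow J$ pulls back along $G\twoheadrightarrow H$ to a non-trivial quotient of $G$, which by Theorem~\ref{C} satisfies $|J|\geq |I|$; together with the fact that $I$ is itself a quotient of $H$, this gives the second half of the corollary.

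There is no genuine obstacle here beyond verifying the two structural inputs -- surjectivity of $\mathrm{Mod}(\Sigma_g)\to\mathrm{Sp}(2g,\mathbb{Z})$ and perfectness of $\mathrm{Mod}(\Sigma_g)$ for $g\geq 3$ -- both of which are standard. The entire argument is a short formal consequence of Theorem~\ref{C} and the general principle that smallest quotients descend through surjections.
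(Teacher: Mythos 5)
Your proposal is correct and follows essentially the same route as the paper: both use the observation stated in the paper immediately before the corollary (that smallest non-cyclic/non-trivial quotients descend along surjections $G\twoheadrightarrow H\twoheadrightarrow I$), applied to the factorization of $\Phi$ through the integral symplectic group, combined with Theorem~\ref{C}. The only cosmetic difference is that for the non-trivial case you appeal directly to the perfectness of $\mathrm{Mod}(\Sigma_g)$ for $g\geq 3$ to convert Theorem~\ref{C} into a statement about non-trivial quotients, whereas the paper cites Theorem~\ref{Cb}; since Theorem~\ref{Cb}'s closed-surface case is itself derived from Theorem~\ref{C} via exactly that perfectness fact, this is not a substantive divergence.
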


\noindent \textit{Acknowledgements}. The author would like to thank Dan Margalit for various useful discussions, suggesting to look at results for mapping class groups, and especially for explaining to us the much shorter proof of Lemma~\ref{lemA}. The author is grateful to John Etnyre for helpful suggestions. The author thanks the referee for comments and corrections. The author is grateful to Dawid Kielak and Emilio Pierro for comments on an earlier draft of this paper. This work is partially supported by NSF grant DMS-1906414.

\section{Background}\label{back}

In this section we will collect several necessary definitions and results. We will also prove a claim, which will serve as base cases for inductive proofs later.

\subsection*{Braid Groups}
The most well known quotient of the braid group $B_n$ \cite{artin1947br2} on $n$ strands is the symmetric group $S_n$ on $n$ letters, obtained by forgetting all crossing information. This quotient map $\pi:B_n\rightarrow S_n$ can alternately be described as adding the relations $\sigma_i^2=1$ (here $\sigma_i$ are half twists) to  the Artin presentation~\cite{artin1947br2} of the braid group $B_n$:
$$B_n=\{\sigma_1,...,\sigma_{n-1}| \sigma_i\sigma_{i+1}\sigma_i=\sigma_{i+1}\sigma_i\sigma_{i+1} \text{ for all } 1\leq i < n-1, \sigma_i\sigma_j=\sigma_j\sigma_i \text{ if } |i-j|>1\}. $$
\emph{Birman-Ko-Lee generators \cite{BIRMAN1998322}}: Consider $B_{n}$ as the mapping class group of the closed unit disc with $n$ marked points $p_1,...,p_{n}$ with increasing first co-ordinates and identical second coordinate. 

Consider for all $1\leq i\neq j \leq n$, the arcs $\gamma_{i,j}=\gamma_{j,i}$ joining the $p_i$ and $p_j$ going over all $p_k$ between $p_i$ and $p_j$, and let $\rho_{i,j}$ denote the right handed half twists about $\gamma_{i,j}$. For $1\leq i<j\leq n $ the various $\rho_{i,j}$ are the Birman-Ko-Lee generators of the braid group $B_n$, and we note that $\sigma_i=\rho_{i,i+1}$.

\subsection*{Mapping Class Groups}
Let $\Sigma_{g,n}^b$ denote the orientable surface of genus $g$, with $n$ punctures and $b$ boundary components (where we will drop $n$ and $b$ from the notation if they are zero), and we will denote its mapping class group by $\mathrm{Mod}(\Sigma_{g,n}^b)$. Our convention is that mapping classes preserve orientation, fixes boundary components, and can permute the punctures. The subgroup $\mathrm{PMod}(\Sigma_{g,n}^b)$ will denote the pure mapping class group, consisting of mapping classes that fixes the punctures.

We get an epimorphism $\Phi$ from $\mathrm{Mod}(\Sigma_{g}^b)$  by composing the capping homomorphism~\cite[Section 3.6.2]{farb2012primer} with the symplectic representation~\cite[Section 6.3]{farb2012primer}, and the mod two reduction.
%\begin{equation}\label{sr}
   $$ \mathrm{Mod}(\Sigma_{g}^b)\xrightarrow{capping} \mathrm{Mod}(\Sigma_{g}) \xrightarrow {symplectic} \mathrm{Sp}(2g,\mathbb{Z})\xrightarrow {reduce} \mathrm{Sp}(2g,\mathbb{Z}_2). $$
%\end{equation} 
More generally, for $\Sigma_{g,n}$, let us consider the action of mapping class group on homology. If we take a free basis of $H_1(\Sigma_{g,n},\mathbb{Z})$ by taking standard symplectic basis curves for each genus, and a the class of a loop surrounding each puncture, the action of any mapping class can be represented by an invertible integral matrix in $\mathrm{GL}(2g+n,\mathbb{Z})$. Moreover for any such matrix, the top left block is symplectic matrix, the top right block is zero, and the bottom right block will be a permutation matrix.
Thus by projecting to diagonal blocks, we obtain epimorphisms from $\mathrm{Mod}(\Sigma_{g,n})$ (and thus from $\mathrm{Mod}(\Sigma_{g,n}^b)$ as well by capping) to $\mathrm{Sp}(2g,\mathbb{Z})$ (and hence to $\mathrm{Sp}(2g,\mathbb{Z}_2)$) and $S_n$. We will call these homomorphisms to be standard projections from $\mathrm{Mod}(\Sigma_{g,n}^b)$ to $\mathrm{Sp}(2g,\mathbb{Z}_2)$ and $S_n$. It can be seen that this standard projection from $\mathrm{Mod}(\Sigma_{g,n}^b)$ to $S_n$ is the same  as the induced action of the mapping classes on the punctures.\\

\noindent\emph{Some facts about symmetric and symplectic groups}: It is well known that for $n\geq 5$, the only non-trivial quotient of $S_n$ is  $\mathbb{Z}_2$ (obtained by modding out by the simple group $A_n$). Also, it is known~\cite[Chapter 3]{Grove2001ClassicalGA} that the symplectic group $\mathrm{Sp}(2g,\mathbb{Z}_2)$ is simple for $g\geq 3$, and for the exceptional cases we have the isomorphisms $\mathrm{Sp}(2,\mathbb{Z}_2)\cong S_3$ and $\mathrm{Sp}(4,\mathbb{Z}_2)\cong S_6$.\\

The following claim gives the base cases for our inductive proofs later. 
\begin{claim}\label{c34}
The smallest non-cyclic quotient of $B_3$, $B_4$ and $\mathrm{Mod}(\Sigma_{1})=\mathrm{SL}(2,\mathbb{Z})$ is $S_3$. Moreover all epimorphisms from these three groups to $S_3$ are related by a conjugation of $S_3$.

\end{claim}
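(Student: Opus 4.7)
The strategy is in three parts. First, I would exhibit $S_3$ as a quotient of each group: via the standard projection $\pi : B_3 \to S_3$; the map $B_4 \to S_3$ sending $\sigma_1, \sigma_3 \mapsto (1,2)$ and $\sigma_2 \mapsto (2,3)$, whose well-definedness is immediate from the braid relations; and the mod-$2$ reduction $\mathrm{SL}(2,\mathbb{Z}) \to \mathrm{SL}(2,\mathbb{Z}_2) \cong S_3$. Second, I would show that any non-cyclic quotient has order at least $6$: the only non-cyclic group of order less than $6$ is $\mathbb{Z}_2 \times \mathbb{Z}_2$, which is abelian, so it suffices to check that each of the three groups has cyclic abelianization. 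For $B_n$ all Artin generators $\sigma_i$ are conjugate, so they coincide in the abelianization, which is $\mathbb{Z}$; and for $\mathrm{SL}(2,\mathbb{Z}) \cong \mathbb{Z}_4 *_{\mathbb{Z}_2} \mathbb{Z}_6$ a direct computation gives $\mathbb{Z}/12$. Since the only non-cyclic group of order $6$ up to isomorphism is $S_3$, together these two steps identify the smallest non-cyclic quotient.

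For the second part of the claim---that all epimorphisms onto $S_3$ are related by conjugation in $S_3$---I would classify the possible $\phi : G \to S_3$ by analyzing where the standard generators can go. In $B_3$, the images $\phi(\sigma_1), \phi(\sigma_2)$ must be non-commuting conjugate elements generating $S_3$, hence an ordered pair of distinct transpositions; the braid relation is automatic for any such pair. In $B_4$ the same argument forces all $\phi(\sigma_i)$ to be transpositions, and since distinct transpositions in $S_3$ never commute, the relation $\sigma_1 \sigma_3 = \sigma_3 \sigma_1$ forces $\phi(\sigma_1) = \phi(\sigma_3)$. For $\mathrm{SL}(2,\mathbb{Z})$ with amalgam generators $a, b$ satisfying $a^2 = b^3$, the available element orders in $S_3$ force $\phi(a)$ to be a transposition and $\phi(b)$ a $3$-cycle. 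In each case the set of admissible generator tuples has exactly $6$ elements, and $S_3$ acts on this set by conjugation with trivial stabilizers (any such tuple generates $S_3$, whose center is trivial), so the action is transitive and all epimorphisms differ by an inner automorphism of $S_3$.

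The delicate point is ensuring the generator-image analyses above are genuinely exhaustive---one must rule out degenerate targets (e.g.\ a generator sent to the identity) and then confirm that the enumerated count of admissible tuples matches $|S_3|=6$, which is what makes the orbit-stabilizer computation close. Because $S_3$ has only six elements, each of these verifications reduces to a short case check, and no real obstacle arises.
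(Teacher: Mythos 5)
Your proposal is correct and follows essentially the same plan as the paper: exhibit an epimorphism to $S_3$, rule out smaller non-cyclic quotients using that the only one (the Klein four group) is abelian while each source group has cyclic abelianization, and pin down uniqueness by classifying generator images in $S_3$. The paper handles $\mathrm{SL}(2,\mathbb{Z})$ by factoring through $B_3/Z(B_3)\cong\mathrm{PSL}(2,\mathbb{Z})$ rather than via the amalgam presentation, and compresses the generator analysis into the single observation that the only non-commuting pairs in $S_3$ satisfying the braid relation are pairs of distinct transpositions, but these are cosmetic differences.
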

\begin{proof}
 
 The natural homomorphism $\pi$, and $f_4$ in Remark~\ref{re4} shows $S_3$ is a quotient of $B_3$ and $B_4$ respectively. Moreover, it is easy to see that $\pi:B_3\rightarrow S_3$ factors through  $B_3/Z(B_3)\cong \mathrm{PSL}(2,\mathbb{Z})$, and thus $S_3$ is a quotient of $\mathrm{PSL}(2,\mathbb{Z})$ and hence $\mathrm{SL}(2,\mathbb{Z})$.
 We note that the all groups except $S_3$ of order at most $|S_3|=6$  are abelian (the only non-cyclic group among them is the Klein four group), and thus cannot be a non-cyclic quotient of a group with cyclic abelianization (such as braid groups or $\mathrm{SL}(2,\mathbb{Z})$).
  The last statement of the claim follows\footnote{ For $n=4$, a similar (but more tedious) check verifies Remark~\ref{re4}.} by noting that the only pair of non-commuting elements in $S_3$ satisfying the braid relation are the transpositions.\end{proof}

\section{The inductive orbit stabilizer method}\label{IOS}

The orbit stabilizer theorem is widely used in computing orders of finite groups which naturally act on a space, and as this paper illustrates, it is also useful for determining orders of smallest non-cyclic\footnote{It may be possible to adapt this method to find smallest non-trivial/non-abelian/non-solvable quotient.} quotients of groups. In our context we work with an infinite family of groups, and we can use the orbit stabilizer theorem inductively. We formulate the steps of the method below. While this method may not be new, proofs of similar results in the literature seem to rely on more complicated methods, as mentioned in the Introduction.

Suppose we have a nested family of groups $(G_n)_{n\geq 1}$ with cyclic abelianizatons.  If we want to show the smallest non-cyclic quotient is the family of groups $(H_n)_{n\geq 1}$, with a family of quotient maps $\pi_n:G_n \rightarrow H_n$, it suffices to carry out the following steps (after checking base cases).

\begin{enumerate}
    \item \emph{Lower bound on orbit size}:  Find the size $k$ of an orbit of the conjugation action of $H_n$. Find a suitable collection collection of elements $x_1,...,x_k$ in $G_n$ so that their images generate the orbit, and show that the normal closure of each $x_ix_j^{-1}$ contains the commutator subgroup $G'_n$ of $G_n$ (equivalently, under any non-cyclic quotient of $G_n$, the quotient classes $\overline{x_i}$ are all distinct).

    \item \emph{Inductively find size of stabilizer}: For some non-cyclic quotient $q:G_n\rightarrow I_n$, inductively bound the size of the stabilizer of the quotient class of $q(x_1)$ in $I_n$, so that the orbit stabilizer theorem implies $|I_n|\geq |H_n|$. For instance, if the centralizer of $x_1$ contains $\langle x_1 \rangle\times G_{n-i}$, it may be possible to get the desired result by applying the inductive hypothesis on the induced quotient $G_{n-i}\rightarrow q(G_{n-i})/Z(q(G_{n-i}))$.
    Finally, if $|I_n|=|H_n|$, show that $I_n$ is isomorphic to $H_n$. This follows if the kernel of $q$ contains the kernel of $\pi_n$, which moreover shows any epimorphism from $G_n$ to $H_n$ is $\pi_n$ composed with an automorphism of $H_n$.
\end{enumerate}

Some modifications, such as considering a different group action, may be needed to make this method work in a particular situation, and we will see one such modification for the mapping class groups case later.

\section{Smallest non-cyclic quotients of braid groups}

We will carry out the steps of the inductive orbit stabilizer method here for Artin braid groups, and show that smallest non-cyclic quotients are symmetric groups.

  \subsection*{Lower bounds for size of orbit} Let us begin by observing that the conjugacy class of all transpositions in $S_n$ consists of $n\choose 2$ elements. We will take $x_i$'s to be the Birman-Ko-Lee generators  of the braid group, as mentioned in Section~\ref{back}.
 The following lemma will complete the first step.

\begin{lem}\label{lemA} For $n\geq 5$, and a non-cyclic quotient of $B_{n}$, the $n \choose 2$ quotient classes $\overline{\rho_{i,j}}$ are distinct.

\end{lem}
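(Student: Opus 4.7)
The plan is to prove the contrapositive: if for some two distinct pairs $\{i,j\} \neq \{k,l\}$ we have $\overline{\rho_{i,j}} = \overline{\rho_{k,l}}$ in the quotient $Q$, then $Q$ is cyclic. Since the $\rho_{i,j}$ generate $B_n$, it suffices to force all $\binom{n}{2}$ classes $\overline{\rho_{i,j}}$ to coincide, so that $Q$ becomes generated by a single element. The main tool is the Birman--Ko--Lee triangular relation
$$ \rho_{t,s}\rho_{s,r}\rho_{t,s}^{-1} = \rho_{t,r} \qquad (t>s>r), $$
which performs an \emph{exact} conjugation of one BKL generator to another, together with the commutation $\rho_{a,b}\rho_{c,d} = \rho_{c,d}\rho_{a,b}$ when the intervals $[a,b]$ and $[c,d]$ are disjoint or nested.

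First I would reduce the disjoint case to the overlapping case. Assume $\{i,j\}\cap\{k,l\}=\emptyset$. The hypothesis $n\geq 5$ provides an index $m \notin\{i,j,k,l\}$. Conjugating both sides of $\overline{\rho_{i,j}} = \overline{\rho_{k,l}}$ by $\overline{\rho_{l,m}}$ fixes the left side (by disjointness) while, via the BKL relation applied to the triple $\{k,l,m\}$, transforms the right side into $\overline{\rho_{k,m}}$. Transitivity then gives $\overline{\rho_{k,l}} = \overline{\rho_{k,m}}$, an overlapping equality sharing the index $k$.

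Next I would propagate an overlapping equality. If $\overline{\rho_{t,s}} = \overline{\rho_{s,r}}$, applying the BKL relation directly yields
$$ \overline{\rho_{t,r}} = \overline{\rho_{t,s}}\,\overline{\rho_{s,r}}\,\overline{\rho_{t,s}}^{-1} = \overline{\rho_{t,s}}, $$
so all three classes on the triple $\{t,s,r\}$ coincide. To extend to a further index $d$, conjugate an equality like $\overline{\rho_{a,b}} = \overline{\rho_{b,c}}$ by an appropriate $\overline{\rho_{c,d}}$: one side is fixed (disjointness of $\{a,b\}$ and $\{c,d\}$), while the other maps to $\overline{\rho_{b,d}}$ (by BKL on $\{b,c,d\}$), producing a new overlapping equality involving $d$. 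Iterating the triple-collapse together with this cascade inducts the equality over a growing subset of indices until all $\binom{n}{2}$ classes $\overline{\rho_{i,j}}$ collapse to a single element, forcing $Q$ to be cyclic and contradicting our hypothesis.

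The main nuisance will be bookkeeping: at each application one must ensure the ordering hypothesis $t>s>r$ in the BKL relation holds, either by relabeling indices or by invoking the equivalent form $\rho_{s,r}^{-1}\rho_{t,s}\rho_{s,r} = \rho_{t,r}$ of the same relation. I do not expect any substantive obstacle beyond this accounting, since every step of both the reduction and the propagation is a single application of BKL or a commutation.
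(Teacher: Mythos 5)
Your approach is structurally parallel to the paper's second (alternate) proof: both propagate a single coincidence $\overline{\rho_{i,j}} = \overline{\rho_{k,l}}$ through relations among Birman--Ko--Lee generators until all $\binom{n}{2}$ classes collapse, forcing the quotient to be cyclic. However, there is a recurring gap. You correctly record at the outset that $\rho_{a,b}$ and $\rho_{c,d}$ commute only when the \emph{intervals} $[a,b]$ and $[c,d]$ are disjoint or nested, yet in both the reduction and the propagation steps you justify commutation merely by disjointness of the \emph{index sets}, which is strictly weaker. For example, in your reduction step: if $i < l < j < m$ then $\{i,j\}\cap\{l,m\}=\emptyset$, but the intervals $[i,j]$ and $[l,m]$ are linked, so $\gamma_{i,j}$ and $\gamma_{l,m}$ intersect and conjugation by $\overline{\rho_{l,m}}$ does \emph{not} fix $\overline{\rho_{i,j}}$. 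The same problem recurs in your propagation step whenever $d$ lands between $a$ and $b$, so that $[c,d]$ and $[a,b]$ link. In each situation, whether a given conjugating generator works depends on the full linear order of all five indices, and the correct choice changes from case to case (sometimes $\rho_{i,m}$ or $\rho_{k,m}$ is needed in place of $\rho_{l,m}$). This case analysis, which your proposal waves off as a "nuisance" of reordering indices in the BKL relation, is in fact the substance of the argument; the default conjugating elements you name are wrong in several of the orderings.

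For comparison, the paper's alternate proof sidesteps most of this bookkeeping with an algebraic observation you do not use --- if two quotient classes satisfy both the braid relation $xyx=yxy$ and the far commutation relation $xy=yx$, then $x=y$ --- applied after choosing the auxiliary index $o\in\{i,j,k,l\}$ closest to $m$ so that $\gamma_{o,m}$ is genuinely disjoint from $\gamma_{k,l}$. The paper's primary proof is different altogether and more geometric: it locates an arc $\delta$ meeting $\gamma_{i,j}$ at one endpoint and disjoint from $\gamma_{k,l}$, so that the commutator of $\rho_\delta$ with $\rho_{i,j}\rho_{k,l}^{-1}$ is conjugate to $\sigma_1\sigma_2^{-1}$, which normally generates the commutator subgroup $B_n'$.
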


We should note that the lemma does not hold for $n=4$, as there is an exceptional homomorphism from $B_4$ to $B_3$ (which can be further quotiented to obtain $f_4:B_4\rightarrow S_3$ mentioned in Remark~\ref{re4}) defined by $\sigma_1\mapsto \sigma_1$, $\sigma_2\mapsto \sigma_2$ and $\sigma_3\mapsto \sigma_1$.

Totally symmetric sets are subsets of a
group with the property that any homomorphism restricts to an injective
map on that set or to a trivial map on that set (that is not the
definition, but a consequence, see ~\cite[Lemma 2.1]{first_TSS}).  Lemma~\ref{lemA} can be similarly phrased as
saying that the set $\{\rho_{i,j}\}$ satisfies this same property. We will give two proofs of this lemma, the first is essentially in \cite[Lemma 4.2]{chen2019homomorphisms}, and the second is more hands-on.

\begin{proof} Suppose we have $\gamma_{i,j}$ and $\gamma_{k,l}$ with $\{i,j\}\neq \{k,l\}$ having the same quotient class. Since $n\geq 5$, we can find an arc $\delta$ between two marked points disjoint from $\gamma_{k,l}$, and sharing an endpoint with $\gamma_{i,j}$. It follows that $\delta$ and its image under $\rho_{i,j}\rho_{k,l}^{-1}$  shares one endpoint and have disjoint interiors. Thus by a change of coordinates principle \cite[Section 1.3.2]{farb2012primer}, the commutator of $\rho_\delta$ (the right handed half twist about $\delta$) and $\rho_{i,j}\rho_{k,l}^{-1}$ is conjugate to $\sigma_1\sigma_2^{-1}$. Now, as $\rho_{i,j}\rho_{k,l}^{-1}$ is in the kernel of the quotient map, so is its commutator with $\rho_\delta$, and thus so is $\sigma_1\sigma_2^{-1}$. The result now follows since $\sigma_1\sigma_2^{-1}$ normally generates $B'_n$ (which is a direct consequence of the braid and far commutation relations), and the fact that $B_n/B'_n$ is cyclic.
\end{proof}

\begin{proof}[Alternate proof] We will repeatedly use the following two observations:
\begin{enumerate}
    \item If two elements $x,y$ in any group satisfy both braid and far commutation relations, then $xyx=yxy \Rightarrow xyx=xyy \Rightarrow x=y$, i.e. $x$ and $y$ must coincide.
    \item For any distinct $i,j,k$, if $\overline{\rho_{i,j}}$ is same as $\overline{\rho_{j,k}}$, then by the partial commutation relation\footnote{for an appropriate $\epsilon\in\{-1,1\}$  depending on the relative position among $i,j,k$, we have $\rho_{i,j}^\epsilon (\gamma_{j,k})=\gamma_{i,k}$, and hence we get partial commutation relation $\rho_{i,k}=\rho_{j,k}^{\epsilon} \rho_{i,j}\rho_{j,k}^{-\epsilon}$.}, they are also equal to $\overline{\rho_{i,k}}$.
\end{enumerate}

Now, let us suppose the lemma is not true, let us first consider the case we have $\overline{\rho_{i,j}}=\overline{\rho_{j,k}}$ with distinct $i,j, k$, and by the second observation above, we may assume $i<j<k$. For any $l$ distinct from $i,j,k$ we see that
  if $l$ is (respectively is not) between $i$ and $j$, by the first observation we have   $\overline{\rho_{k,l}}=\overline{\rho_{j,k}}$ (respectively  $\overline{\rho_{k,l}}=\overline{\rho_{i,j}}$).
 By repeatedly applying the second observation, we see all the $\overline{\rho_{i,j}}$'s must coincide, and thus the quotient is cyclic (as $B_{n}$ is generated by the half twists $\sigma_i$'s), a contradiction.

Let us now consider the case we have $\overline{\rho_{i,j}}=\overline{\rho_{k,l}}$ for distinct $i,j,k,l$. 
Since $n+1\geq 5$, we can find $m$ distinct from all $i,j,k,l$.
Let $o\in \{i,j,k,l\}$ be such that $|o-m|$ is smallest. By symmetry, without loss of generality we may assume that $o\in\{i,j\}$. We see that $\overline{\rho_{i,j}}$ and $\overline{\rho_{o,m}}$ satisfies both braid relation (as $o$ is common) and far commutation relation (as $\overline{\rho_{i,j}}=\overline{\rho_{k,l}}$, and  $\gamma_{k,l}$ and $\gamma_{o,m}$ are disjoint). By the first observation, we must have $\overline{\rho_{i,j}}=\overline{\rho_{o,m}}$, and by our discussion in the previous paragraph, all the $\overline{\rho_{i,j}}$'s must be the same, again leading to a contradiction.
\end{proof}

 \subsection*{Inductive step} 
  Now we will use induction to prove the Theorem~\ref{A} (and we repeat the statement below for convenience).\\
  \noindent\textit{Inductive hypothesis:} Suppose $n=3$ or $n\geq 5$. If $G$ is a non-cyclic quotient of $B_n$, then either $|G|> |S_n|=n!$, or $G$ is isomorphic to $S_n$.  Moreover, in the latter case the quotient map $B_n\rightarrow G$ is obtained by postcomposing the natural map $\pi$ with an automorphism of $S_n$.

 We will use induction on $n$ in steps of two, and we will use the base case $n=3$ from Claim~\ref{c34}, and the base case $n=6$ from the computer assisted proof Caplinger-Kordek~\cite{caplinger2020small}. But we can also do the $n=6$ case by hand with a separate argument similar to the inductive proof, as explained after this proof.
 
 \noindent\textit{Proof idea:} Let us note that the centralizer of a transposition $(1,2)$ in $S_n$ is $\{1,(1,2)\}\times S_{n-2}$, where $S_{n-2}$ is the symmetric group on the letters $3,...,n$. Similarly, we see that the centralizer of $x_1=\sigma_1$ in $B_n$ contains $\langle \sigma_1 \rangle \times B_{n-2}$, which projects to $\{1,(1,2)\}\times S_{n-2}$ under $\pi$. If under some non-cyclic quotient of $B_n$, the centralizer of $\overline{x_1}$ is  $\langle\overline{x_1}\rangle\times \overline{B_{n-2}}$, then use inductive hypothesis on the size of $\overline{B_{n-2}}$. But, $\langle\overline{x_1}\rangle$ and $\overline{B_{n-2}}$ may not intersect trivially, however we see that their intersection is central in $\overline{B_{n-2}}$. Therefore, we can use the inductive hypothesis on $\overline{B_{n-2}}/Z(\overline{B_{n-2}})$.

\begin{proof} [Proof of Theorem~\ref{A}]
As mentioned above, we will use the base cases $n\in\{3,6\}$, and since use induction on $n$ in steps of two, and and this will imply the result for all odd $n\geq 5$, and even $n\geq 8$.

We will assume the inductive hypothesis is true for $k=n-1$ and prove the statement for $k=n+1$ (with $n+1\geq 5)$.
Suppose $q:B_{n+1} \rightarrow G$ be a non-cyclic quotient of smallest order. By Lemma~\ref{lemA}, it follows that all the $\frac{(n+1)n}{2}$ quotient classes $\overline{\rho_{i,j}}$'s must be distinct for non-cyclic $G$. It is known that all the $\rho_{i,j}$'s are conjugate in $B_{n+1}$, so $\overline{\rho_{i,j}}$'s are conjugate in $G$. Therefore, if we consider the group action of $G$ on itself by conjugation, the orbit stabilizer theorem tells us
\begin{equation}\label{eq1}
   |G|=|O||C| \geq \frac{(n+1)n}{2}|C|,
\end{equation}
 where $C$ denotes the centralizer (i.e. stabilizer of the conjugation action) of the element $\overline{\rho_{1,2}}$, and $O$ denotes its conjugacy class (i.e. the image of the half twists). Since $\sigma_1=\rho_{1,2}$ commutes with the subgroup $V_{1,2}$ of $B_{n+1}$ generated by $\sigma_3,...,\sigma_n$ (thus $V_{1,2}$ is isomorphic to $B_{n-1}$), we see $C$ contains $H_{1,2}:=q(V_{1,2})$ as a subgroup, and clearly it also contains $\overline{\rho_{1,2}}$.
It follows from Lemma~\ref{lemA} it $H_{1,2}$ is not cyclic, and so we can apply inductive hypothesis to any non-cyclic quotient of $H_{1,2}$.

Let $M$ denote the cyclic subgroup  generated by $\overline{\rho_{1,2}}$ in $G$. We see that  $Y=H_{1,2}\cap M$ is in the center $Z$ of $H_{1,2}$ as $\overline{\rho_{1,2}}$ commutes with all elements $H_{1,2}$. 
 In case $H_{1,2}/Z$ is cyclic, we know that $H_{1,2}$ is abelian, but as $H_{1,2}$ is a quotient of $V_{1,2}\cong B_{n-1}$, it has to factor through the abelianization and therefore cyclic, contradicting Lemma~\ref{lemA}. Hence $H_{1,2}/Z$ is a non-cyclic quotient of $B_{n-1}$, and so by the inductive hypothesis for $k=n-1$, we have $|H_{1,2}/Z|\geq (n-1)!$. Thus we have $|H_{1,2}|\geq |Z| (n-1)! \geq |Y| (n-1)!$. Also if $D$ denotes the subgroup of $C$ generated by $H_{1,2}$ and $M$, we see that $M$ is in the center of $D$ and thus $|D|=|M/Y||H_{1,2}|\geq|M|(n-1)!$

By combining with Equation~\eqref{eq1}, we see that \begin{equation}
    |G|\geq \frac{(n+1)n}{2}|C|\geq \frac{(n+1)n}{2}|D|\geq \frac{(n+1)n}{2}|M| (n-1)!=(n+1)!\frac{|M|}{2}.
\end{equation}
Thus the only way $|G|\leq (n+1)!$ is if $|M|=1$ (in this case $\overline{\rho_{1,2}}=1$ so $G$ is the trivial group, contradiction) or $|M|=2$. If the latter case happens then $q(\sigma_i^2)=1$ for all $i$, and thus $q$ factors through the standard quotient map $\pi: B_{n+1}\rightarrow S_{n+1}$. Since the only proper quotient of $S_{n+1}$ (for $n+1\geq 5$) is $\mathbb{Z}_2$, it must be the case that $G$ is isomorphic to $S_{n+1}$, as required. Moreover this shows that $q$ is a composition of the standard map $\pi$ with an automorphism of $S_{n+1}$. \end{proof}

\begin{proof}[Proof of Theorem~\ref{A} for n=6]

We will show the desired result for this case using a similar argument as above, and we use the same notation. Let $m$ denote the order of  $\overline{\rho_{1,2}}$ in $G$ (a non-cyclic quotient of $B_6$ of smallest order). If $m=2$, we know $q:B_6\rightarrow G$ factors through $S_6$, and therefore the desired result holds, so we will assume $m>2$ hereafter. By Equation \eqref{A}, we have $|G|\geq 15|C|\geq 15|H_{1,2}|$. The following claim gives a lower bound on $|H_{1,2}|$ which implies $|G|\geq 6!$, and thus $|G|=6!$. 

\begin{claim}  For $m>2$, we have $|H_{1,2}|\geq 48$, and equality holds only if $m=4$ and $\overline{\sigma_3}^2=\overline{\sigma_5}^2$.
\end{claim}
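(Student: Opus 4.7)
The plan is to apply the orbit--stabilizer theorem to the conjugation action of $H_{1,2}$ on itself at the element $\overline{\sigma_3}=\overline{\rho_{3,4}}$, then bound the centralizer via the natural abelian subgroup generated by the commuting far half-twists $\overline{\sigma_3}$ and $\overline{\sigma_5}$.

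By Lemma~\ref{lemA} applied in the ambient $G$, the six Birman--Ko--Lee classes $\overline{\rho_{i,j}}$ with $3\leq i<j\leq 6$ are pairwise distinct; they all lie in $H_{1,2}$ since the $\rho_{i,j}$ are the Birman--Ko--Lee generators of $V_{1,2}\cong B_4$, and they are mutually conjugate in $H_{1,2}$. Orbit--stabilizer therefore gives $|H_{1,2}|\geq 6|C|$, where $C:=C_{H_{1,2}}(\overline{\sigma_3})$. Because $\sigma_3$ and $\sigma_5$ far-commute in $B_6$, $C$ contains the abelian subgroup $A:=\langle\overline{\sigma_3},\overline{\sigma_5}\rangle$ of order $m^2/d$, where $d:=|\langle\overline{\sigma_3}\rangle\cap\langle\overline{\sigma_5}\rangle|$ divides $m$. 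So it suffices to show $|A|\geq 8$ with the stated equality condition, plus rule out the exceptional subcase $d=m$.

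The case $d<m$ is straightforward. If $d=1$, then $|A|=m^2\geq 9$ since $m\geq 3$, giving $|H_{1,2}|\geq 54$. If $1<d<m$, then $d$ is a proper nontrivial divisor of $m$, so $d\leq m/2$ and $|A|\geq 2m\geq 8$ for $m\geq 4$; no such $d$ exists for $m=3$. Equality $|A|=8$ occurs precisely at $m=4,\,d=2$, in which case the order-two intersection equals both $\langle\overline{\sigma_3}^2\rangle$ and $\langle\overline{\sigma_5}^2\rangle$, forcing $\overline{\sigma_3}^2=\overline{\sigma_5}^2$ as claimed.

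The residual case $d=m$ means $\overline{\sigma_5}=\overline{\sigma_3}^k$ for some $k$ coprime to $m$ with $k\not\equiv 1\pmod m$ (by Lemma~\ref{lemA}); here $H_{1,2}=\langle\overline{\sigma_3},\overline{\sigma_4}\rangle$ is a non-cyclic quotient of $B_3$. The plan is to rule this out by showing it contradicts $G$ being a smallest non-cyclic quotient of $B_6$. Orbit--stabilizer inside $H_{1,2}$ (whose centralizer of $\overline{\sigma_3}$ contains $\langle\overline{\sigma_3}\rangle$) gives $|H_{1,2}|\geq 6m$; since $\overline{\sigma_1}$ commutes with $H_{1,2}$ but cannot lie in $Z(H_{1,2})$---the relevant candidate groups for $H_{1,2}$ (for example $\mathrm{SL}(2,3)$ or $A_4$ when $m=3$, and $S_4$ when $m=4$) have center too small to contain an order-$m$ element---the subgroup $\langle\overline{\sigma_1}\rangle\cdot H_{1,2}\subseteq C_G(\overline{\sigma_1})$ has order at least $m|H_{1,2}|$, yielding $|G|\geq 15m\cdot 6m=90m^2>720=|S_6|$ for $m\geq 3$, contradicting minimality of $G$. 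The main technical hurdle is precisely this $d=m$ case: for $m=3$ one can alternatively derive a short direct contradiction from the braid relations (forcing $\overline{\sigma_3}=\overline{\sigma_4}$ and hence contradicting Lemma~\ref{lemA}), but for $m\geq 4$ the minimality of $G$ must be invoked via the center-of-$H_{1,2}$ analysis.
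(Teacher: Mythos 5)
Your overall strategy---orbit--stabilizer inside $H_{1,2}$ at $\overline{\sigma_3}$, with the centralizer bounded below by the abelian subgroup $A=\langle\overline{\sigma_3},\overline{\sigma_5}\rangle$ of order $m^2/d$---is essentially the paper's; your case split on $d=|\langle\overline{\sigma_3}\rangle\cap\langle\overline{\sigma_5}\rangle|$ is a slight repackaging of the paper's split on whether the two cyclic subgroups coincide, and the $d<m$ analysis and the extraction of $m=4$, $\overline{\sigma_3}^2=\overline{\sigma_5}^2$ in the equality case are fine.

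The problem is the residual case $d=m$ (i.e.\ $\langle\overline{\sigma_3}\rangle=\langle\overline{\sigma_5}\rangle$), which is precisely where the paper does the real work and where your argument has a genuine gap. You want $|\langle\overline{\sigma_1}\rangle\cdot H_{1,2}|\geq m|H_{1,2}|$, which requires $\langle\overline{\sigma_1}\rangle\cap H_{1,2}=1$. Showing $\overline{\sigma_1}\notin Z(H_{1,2})$ is only equivalent to $\overline{\sigma_1}\notin H_{1,2}$ (since $\overline{\sigma_1}$ centralizes $H_{1,2}$, membership would force it into the center), and this does not control the full intersection: e.g.\ $\overline{\sigma_1}^2$ could still lie in $H_{1,2}$, cutting your bound in half. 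Worse, the justification for $\overline{\sigma_1}\notin Z(H_{1,2})$ is an appeal to a short list of ``candidate groups'' for $H_{1,2}$; no argument is given that these exhaust the possibilities for a non-cyclic quotient of $B_3$ in the relevant size range, so this step is not a proof.

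The paper avoids all of this. If $\overline{\sigma_5}=\overline{\sigma_3}^p$, conjugating by the image of a periodic braid gives $\overline{\sigma_3}=\overline{\sigma_1}^p$ and $\overline{\sigma_4}=\overline{\sigma_2}^p$, so $G=\langle\overline{\sigma_1},\overline{\sigma_2}\rangle$. But $\sigma_4$ far-commutes with both $\sigma_1$ and $\sigma_2$ in $B_6$, so $\overline{\sigma_4}$ commutes with a generating set and is therefore central in $G$---contradicting that $\overline{\sigma_4}$ has a nontrivial conjugacy class (of size $15$) by Lemma~\ref{lemA}. That argument is short, uses only braid-group structure, and makes no assumptions about what $H_{1,2}$ or $G$ could be. I'd recommend replacing your $d=m$ analysis with this.
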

\begin{proof}
We see that the ${4 \choose 2}=6$ elements $\overline{\rho_{i,j}}$ are distinct for $3\leq i <j \leq 6$ (we are applying Lemma~\ref{lemA} for $n=6$, and not 4). Thus by the orbit stabilizer theorem we have $|V_{1,2}|=|\hat{O}| |\hat{C}|$, where $\hat{O}$ and $\hat{C}$ denotes the orbit and centralizer of the element $\overline{\rho_{3,4}}=\overline{\sigma_3}$ in $V_{1,2}$.  We see that $\hat{C}$ contains the cyclic subgroups generated by the commuting elements $\overline{\sigma_3}$ and $\overline{\sigma_5}$.

In case these subgroups coincide, we will have $\overline{\sigma_5}=\overline{\sigma_3}^p$ for some $p$, and by an appropriate conjugations in $G$ (by the image of a periodic braids), we get $\overline{\sigma_3}=\overline{\sigma_1}^p$ , and $\overline{\sigma_4}=\overline{\sigma_2}^p$. It would therefore follow that $G$ is generated by $\overline{\sigma_1}$ and $\overline{\sigma_2}$, but then the stabilizer of $\overline{\sigma_4}$ is all of $G$, contradicting that we have a non-trivial orbit of $\overline{\sigma_4}$. Thus $\hat{C}$ properly contains the cyclic subgroup generated by $\overline{\sigma_5}$, and so $|\hat{C}|\geq 2m$.
For $m=3$, we see the subgroups generated by $\overline{\sigma_3}$ and $\overline{\sigma_5}$ cannot intersect (or otherwise they coincide) and therefore  $|\hat{C}|= 9$, and thus $|V_{1,2}|\geq 6\cdot 9=54$. Lastly, for $m\geq 4$, we have $|\hat{C}|\geq 2m$
 and so $|V_{1,2}|=|\hat{O}| |\hat{C}|\geq 6\cdot 2m=12 m\geq 48$. Moreover it is easily checked that $|V_{1,2}|=48$ if and only if $m= 4$ and $\overline{\sigma_3}^2=\overline{\sigma_5}^2$. \end{proof}

 It remains to consider the case $|G|= 6!$, $m=4$ and $\overline{\sigma_3}^2=\overline{\sigma_5}^2$. By conjugation by image of a periodic braid it follows $\overline{\sigma_1}^2=\overline{\sigma_3}^2$. The non-trivial (since $m\neq 2$) element $\overline{\sigma_1}^2$ (commuting with $\overline{\sigma_1},\overline{\sigma_3},\overline{\sigma_4},\overline{\sigma_5}$) is in the center of $G$ , as $\overline{\sigma_2}$ commutes with $\overline{\sigma_5}^2$ ($=\overline{\sigma_1}^2$). Thus $G$ has non-trivial center $Z(G)$, and so $G/Z(G)$ must be a strictly smaller non-cyclic quotient of $B_6$, a contradiction.
\end{proof}

We will now see how Theorem~\ref{A} implies Artin and Lin's results.
\begin{proof}[Proof of Corollary~\ref{ALc}]
If $f:B_n\rightarrow S_k$ is a non-cyclic homomorphism, by Theorem~\ref{A}, we must have $k=n$ and  we have $f=g\circ \pi$, where $g:S_n\rightarrow S_n$ is an automorphism. Now we use the fact,  due to H\"{o}lder \cite{Hol}, that for $n\neq 2,6$ all automorphisms of $S_n$ are inner, and there is exactly one outer automorphism of $S_6$ up to conjugation, which is mentioned in the statement of the corollary.
\end{proof}

\section{Smallest non-cyclic quotients of mapping class groups }
We will use a slightly modified form of the inductive orbit stabilzer method here. In the inductive step, it will be more convenient to look at the conjugation action on a pair (instead of a single element) of elements of the quotient.
  \subsection*{Lower bounds for size of orbit} We note that the orbit of all transvections in $\mathrm{Sp}(2g,\mathbb{Z}_2)$ is $2^{2g} -1$, since these are in bijection with primitive vectors in $(\mathbb{Z}/{2\mathbb{Z})^{2g}}$. In this case we will take $x_i$'s to be suitable right handed Dehn twists about simple closed curves so that their mod two homology classes give us all primitive vectors in $(\mathbb{Z}/{2\mathbb{Z})^{2g}}$. Corresponding to each primitive vector $v$ with 0's and 1's in first homology $H_1(\Sigma_g,\mathbb{Z})$, we will construct simple closed curve $\alpha_v$ realizing this homology class, and denote by $T_v$ the right handed Dehn twist about the curve $\alpha_v$. Starting at the first entry of $v$, for each non-zero pair $(p,q)$ of entries we can draw the $(p,q)$ curve on the corresponding genus, and we can join these curves by standard bands running straight across. For instance, the red and green curves in the leftmost picture in Figure~\ref{orb} shows the $(1,0)$ and $(0,1)$ curves on a genus, which is then band-summed with the other $(p,q)$ curves. It is easy to see that if we have two binary vectors $v$ and $w$, which differ on the same pair of entries, then by localizing to the corresponding genus, we can find a third simple closed curve $\beta$ which intersects exactly one of $\alpha_v$ or $\alpha_w$ once, and is disjoint from the other, as illustrated in Figure~\ref{orb}.

 \begin{figure}[!ht]
    \centering
    \includegraphics[width=12 cm]{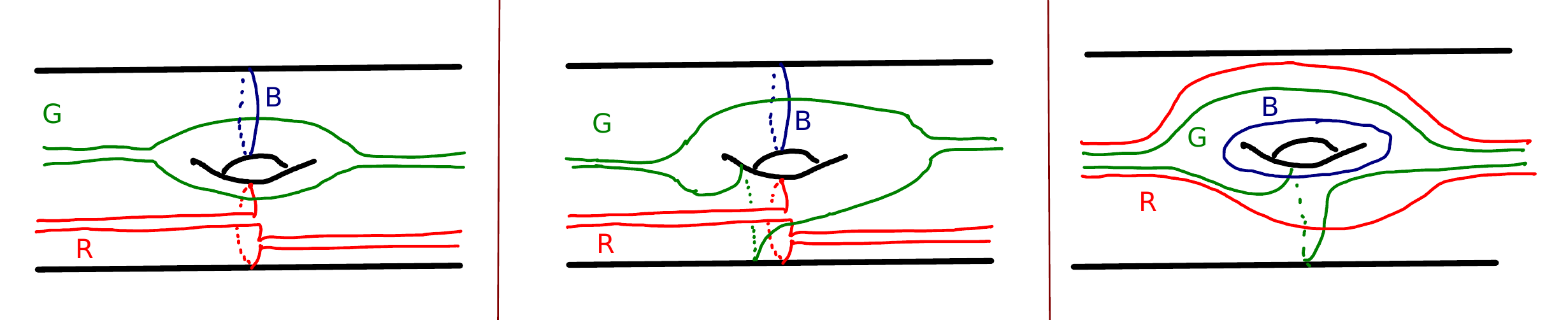}
    \caption{Illustrative examples of finding a new simple closed curve $B$ (in blue) disjoint from the red curve $R$ and having geometric intersection number one with the green curve $G$. }
    \label{orb}
\end{figure}
 \begin{rem}
 As Dan Margalit pointed out to us, the above construction can also be done using double branched covers, which can be more useful in certain situations. By quotienting out by the hyperelleptic involution, we can consider $\Sigma_g$ as a double branched cover over the sphere $\Sigma_0$, with $2g+2$ branch points, $y_1,...,y_{2g+2}$. We can think of the branched cover of the pair of branch points $y_{2g+1},y_{2g+2}$ as forming a tube connecting two disjoint $\Sigma_0^{g+1}$, and the rest of the pairs of correspond to adding genus.
 For each subset of the first $2g$ branch points, we can consider a simple closed curve in $\Sigma_{0,2g+2}$ enclosing these points (we think of the region containing $y_{2g+2}$ as outside), and if necessary $y_{2g+1}$ so that the total number of points is even. The lift of this curve realizes the mod two homology class of the binary vector corresponding to which branch points were chosen (in fact there is a bijection between $H_1(\Sigma_g;\mathbb{Z}_2)$ and the even subgroup of $H_1(\Sigma_{0,2g+2};\mathbb{Z}_2)$). Lastly, let us observe that given any two mod two non-homologous simple closed curves in $\Sigma_{0,2g+2}$, it is possible to choose an arc joining two branch points which intersects one and is disjoint from the other, and its lift is a simple closed curve in $\Sigma_g$ having the same property.
 \end{rem}
 \begin{lem}\label{lemB} For $g\geq 1$ and any non-cyclic quotient of $\mathrm{Mod}(\Sigma_{g})$, the $2^{2g}-1$ quotient classes $\overline{T_v}$ must be distinct.
 
 \end{lem}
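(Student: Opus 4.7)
The plan is to adapt the first proof of Lemma~\ref{lemA} to this setting. Suppose for contradiction that there is a non-cyclic quotient $q : \mathrm{Mod}(\Sigma_g) \to Q$ together with distinct non-zero $v, w \in (\mathbb{Z}/2\mathbb{Z})^{2g}$ for which $\overline{T_v} = \overline{T_w}$, so that $T_v T_w^{-1} \in \ker q$. By the construction shown in Figure~\ref{orb} (or the double-branched-cover variant in the subsequent Remark), one can produce a simple closed curve $\beta$ that is disjoint from $\alpha_w$ and meets $\alpha_v$ transversely in a single point.

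Since $T_\beta$ commutes with $T_w$, a short direct commutator computation gives $[T_\beta,\, T_v T_w^{-1}] = [T_\beta, T_v]$, and hence $[T_\beta, T_v] \in \ker q$. The change-of-coordinates principle (\cite[Section 1.3.2]{farb2012primer}) ensures that any pair of simple closed curves on $\Sigma_g$ intersecting in one point is topologically equivalent to $(\beta, \alpha_v)$; by normality of $\ker q$, it then contains every commutator $[T_a, T_b]$ with $i(a,b) = 1$.

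To finish, I would invoke the fact that $\mathrm{Mod}(\Sigma_g)$ is generated by a chain of Dehn twists — the Humphries generators for $g \geq 2$, or the two standard $\mathrm{SL}(2,\mathbb{Z})$ generators for $g = 1$ — about simple closed curves in which consecutive members meet once and non-consecutive members are disjoint. The images in $Q$ of the generators in this chain then pairwise commute: consecutive ones by the previous paragraph, and non-consecutive ones already in $\mathrm{Mod}(\Sigma_g)$ by disjointness. Hence $Q$ is abelian, and since the abelianization of $\mathrm{Mod}(\Sigma_g)$ is cyclic for every $g \geq 1$, the quotient $Q$ itself must be cyclic, the desired contradiction.

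The principal subtleties I would expect are the availability of the curve $\beta$ for \emph{arbitrary} distinct non-zero $v, w$ (handled by the construction preceding the lemma and by the branched-cover remark) and the commutator identity (a short direct computation); the remainder is a standard normal-closure plus generation argument, entirely analogous to the way the first proof of Lemma~\ref{lemA} reduces the braid case to the fact that $\sigma_1 \sigma_2^{-1}$ normally generates $B_n'$.
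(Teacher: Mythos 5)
Your proof is correct, and it parallels the paper's argument in the opening step: produce a curve $\beta$ with $i(\beta,\alpha_w)=0$ and $i(\beta,\alpha_v)=1$. From there the two proofs diverge in two small ways. Where you compute $[T_\beta, T_v T_w^{-1}] = [T_\beta, T_v]$ to place $[T_\beta, T_v]$ in $\ker q$, the paper instead notes that $\overline{T_\beta}$ and $\overline{T_v}=\overline{T_w}$ satisfy both the braid relation and the commuting relation, so they must coincide (observation (1) from the alternate proof of Lemma~\ref{lemA}); that yields the slightly stronger conclusion $\overline{T_\beta}=\overline{T_v}$, but both versions, combined with change of coordinates and normality of $\ker q$, land in the same place. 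The more genuine difference is the last step: the paper cites \cite[Lemma 2.1]{Lanier2018NormalGF} to conclude abelianness, whereas you give a self-contained generation argument via the Humphries generators (respectively the two twists generating $\mathrm{SL}(2,\mathbb{Z})$ when $g=1$), which removes the external dependency at the cost of a few more lines. One small inaccuracy worth flagging: the Humphries generating curves do not literally form a single chain --- the standard set is a chain $c_1,\dots,c_{2g}$ together with one additional curve $c_0$ meeting only $c_4$ once and disjoint from the rest --- but every pair of curves in that set is still either disjoint or meets once, which is all your argument actually uses, so the conclusion stands.
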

\begin{proof}
Suppose we have two different binary vectors $v$ and $w$ so that $\overline{T_v}=\overline{T_w}$. By our above discussion, we can find a simple closed curve $\beta$, so that $T_\beta$ commutes with one of $\overline{T_v}$ or $\overline{T_w}$, and satisfies braid relation with the other. Hence, by the first observation in the alternate proof of Lemma~\ref{lemA}, we see that for two simple closed curves $c,d$ with geometric intersection number one, we have $\overline{T_c}=\overline{T_d}$. By \cite[Lemma 2.1]{Lanier2018NormalGF}, the quotient must be abelian (and hence cyclic since all abelianizations of $\mathrm{Mod}(\Sigma_{g})$ are cyclic~\cite[Chapter 5]{farb2012primer}), which gives a contradiction.
\end{proof}

\subsection*{Inductive step} In this step, we will consider the conjugation action on a pair of group elements, but the size of the orbit readily follows from the conjugation action considered in the first step.  

\begin{proof} [Proof of Theorem~\ref{C}] Let us first recall the statement we are going to prove.\\
  \noindent\textit{Inductive hypothesis:} Let $g\geq 1$. For any non-cyclic quotient $H$ of $\mathrm{Mod}(\Sigma_{g})$, either $|H|>|\mathrm{Sp}(2g,\mathbb{Z}_2)|$, or $H$ is isomorphic to $\mathrm{Sp}(2g,\mathbb{Z}_2)$. Moreover, in the latter case the quotient map $\mathrm{Mod}(\Sigma_{g})\rightarrow H$ is obtained by postcomposing $\Phi$ with an automorphism of $\mathrm{Sp}(2g,\mathbb{Z}_2)$.\\
We will use induction on $g$, and we note that the base case $g=1$ follows from Claim~\ref{c34}. We will inductively assume the statement is true for $k=g-1$ (with $g\geq 2$), and prove it for $k=g$.
Let $q:\mathrm{Mod}(\Sigma_{g})\rightarrow H$ be a quotient of smallest order.
Let $R$ and $S$ denote the right handed Dehn twists about the simple closed curves $\alpha_{e_1}$ and $\alpha_{e_2}$ (we use $e_i$ to denote the $i$-th standard basis vector in $\mathbb{Z}^{2g}$, and the same notation as in the previous section).
By Lemma \ref{lemB}, we know that the conjugacy class of the quotient class $\overline{R}$ in $H$ has size at least $2^{2g}-1$. We will consider the conjugation action of $G$ on the set of all ordered pairs of elements in $G$.
Since our original collection of curves $\alpha_v$, we have $2^{2g-1}(2^{2g}-1)$ ordered pairs with geometric intersection number one, by the change of coordinates principle~\cite[Section 1.3.3]{farb2012primer}, we see that the orbit of the ordered pair $(\overline{R},\overline{S})$ under the conjugation action is at least $2^{2g-1}(2^{2g}-1)$.
We see the stabilizer of ($\overline{R},\overline{S}$) contains the image $I$ under $q$ of $\mathrm{Mod}(\Sigma_{g-1}^1)$ (where $\Sigma_{g-1}^1$ is obtained by cutting $\Sigma_g$ along the separating curve which is the boundary of a regular neighbourhood of $\alpha_{e_1}$ and $\alpha_{e_2}$, i.e. we are deleting the leftmost genus containing $\alpha_{e_1}$ and $\alpha_{e_2}$), since $\mathrm{Mod}(\Sigma_{g}^1)$ fixes $\alpha_{e_1}$ and $\alpha_{e_2}$.
If $Z(I)$ denotes the center of this image $I$, we see that $I/Z(I)$ is a non-cyclic quotient (otherwise $I$ must be abelian, and thus the various conjugate $T_v$'s must map to the same element, contradicting Lemma~\ref{lemB}) of $\mathrm{Mod}(\Sigma_{g-1}^1)$. Since the boundary parallel Dehn twist in $\mathrm{Mod}(\Sigma_{g-1}^1)$ is central, it follows that $I/Z(I)$ is also a non-cyclic quotient of $\mathrm{Mod}(\Sigma_{g-1})$. By inductive hypothesis for $k=g-1$, we have that $|I|\geq |I/Z(I)| \geq|\mathrm{Sp}(2g-2,\mathbb{Z}_2)|$. Thus, by the orbit stabilizer theorem we have \begin{equation}
    |H|\geq 2^{2g-1}(2^{2g}-1)|I|\geq 2^{2g-1}(2^{2g}-1)|\mathrm{Sp}(2g-2,\mathbb{Z}_2)| =|\mathrm{Sp}(2g,\mathbb{Z}_2)|.
\end{equation}
Thus we get the desired result at the numerical level, and moreover in case of equality above we see that $Z(I)$ is trivial.
Moreover from the inductive hypothesis we have $I$ is isomorphic to $\mathrm{Sp}(2g-2,\mathbb{Z}_2)$. It follows that separating twists and for $g\geq 3$, genus one bounding pairs are in the kernel of $q$. Since by results of Birman, Powell and Johnson~\cite{hatcherm}, for $g\geq 3$ (respectively $g=2$) genus one bounding pairs (respectively separating twists) normally generate the Torelli group, we see that $q$ factors through $q_1:\mathrm{Sp}(2g,\mathbb{Z})\rightarrow  H$. Moreover by the inductive hypothesis some $\overline{T_v}$ has order 2, and so the kernel of $q_1$ contains squares of all transvections, and thus by \cite[Proposition A3]{tata1}, the kernel of $q_1$ contains the level two congruence subgroup. Consequently, $q$ in fact factors through $\mathrm{Sp}(2g,\mathbb{Z}_2)$, and the result follows.
\end{proof}

\section{Allowing punctures and boundary components}
In this final section, we will see some results about smallest non-cyclic/non-trivial quotients of $\mathrm{Mod}(\Sigma_{g,n}^b)$. These results are consequence of our main results and facts about the abelianizations of mapping class groups, discussed below.

\emph{Abelianization of Mapping Class Groups}: It is known~\cite[Theorem 5.1]{Korkmaz2002LowdimensionalHG} that 
the abelianization of the pure\footnote{We caution the reader that the reference we are citing follows the convention that mapping classes fix punctures and thus their mapping class group coincides with our pure mapping class group.} mapping class group $\mathrm{PMod}(\Sigma_{g,n})$ is:
\begin{enumerate}
    \item $\mathbb{Z}_{12}$ if $g=1$, $b=0$;
    \item $\mathbb{Z}^b$ if if $g=1$, $b\geq 1$;
    \item $\mathbb{Z}_{10}$ if $g=2$; and
    \item trivial if $g\geq 3$.
\end{enumerate}

This implies the following result (likely known, but we could not find it in the literature):
\begin{lem}\label{llab}
The abelianization of $\mathrm{Mod}(\Sigma_{g,n}^b)$ equals $\mathbb{Z}_{2}$ for $g\geq 3$ and $n\geq 2$.
\end{lem}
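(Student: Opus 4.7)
The plan is to combine the cited Korkmaz theorem with the short exact sequence
\[
1 \longrightarrow \mathrm{PMod}(\Sigma_{g,n}^b) \longrightarrow \mathrm{Mod}(\Sigma_{g,n}^b) \xrightarrow{\ \pi\ } S_n \longrightarrow 1
\]
coming from the action of mapping classes on the set of $n$ punctures. Taking abelianizations is right-exact for any short exact sequence of groups, so this yields an exact sequence
\[
\mathrm{PMod}(\Sigma_{g,n}^b)^{\mathrm{ab}}\longrightarrow \mathrm{Mod}(\Sigma_{g,n}^b)^{\mathrm{ab}}\longrightarrow S_n^{\mathrm{ab}}\longrightarrow 0,
\]
which expresses $\mathrm{Mod}(\Sigma_{g,n}^b)^{\mathrm{ab}}$ as a quotient of $S_n^{\mathrm{ab}}$ by the image of $\mathrm{PMod}(\Sigma_{g,n}^b)^{\mathrm{ab}}$.

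The first step is to invoke item~(4) of the cited Korkmaz result, which says $\mathrm{PMod}(\Sigma_{g,n}^b)^{\mathrm{ab}}=0$ for every $g\geq 3$ (for all $n$ and $b$, as the appearance of $b$ in item~(2) indicates the theorem is stated in the presence of both punctures and boundary components). Once this is granted, the map $\mathrm{Mod}(\Sigma_{g,n}^b)^{\mathrm{ab}}\to S_n^{\mathrm{ab}}$ is an isomorphism; since $S_n^{\mathrm{ab}}\cong \mathbb{Z}_2$ for every $n\geq 2$, we obtain $\mathrm{Mod}(\Sigma_{g,n}^b)^{\mathrm{ab}}\cong \mathbb{Z}_2$, as required.

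To keep the argument concrete, I would point out that an explicit surjection $\mathrm{Mod}(\Sigma_{g,n}^b)\twoheadrightarrow \mathbb{Z}_2$ realizing the isomorphism is the composition of the standard puncture-permutation projection $\pi$ with the sign homomorphism $S_n\to \mathbb{Z}_2$; this already shows $|{\mathrm{Mod}(\Sigma_{g,n}^b)^{\mathrm{ab}}}|\geq 2$, and the preceding exact sequence gives the matching upper bound. There is no serious obstacle here; the argument is essentially a one-line diagram chase built on Korkmaz's computation, and the only thing worth stating carefully is the convention under which item~(4) applies in the boundary-plus-puncture setting.
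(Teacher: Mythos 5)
Your argument is correct, and it takes a genuinely different route from the paper's. The paper works directly with the generators of $\mathrm{Mod}(\Sigma_{g,n}^b)$: it uses the Korkmaz computation, applied to subsurfaces and pushed around by the change of coordinates principle, to argue that every Dehn twist dies in the abelianization, that all half twists are identified, and that squares of half twists die as well (the last via a carefully chosen subsurface $\Sigma_g^c$ whose boundary components enclose pairs of punctures); since Dehn twists and half twists generate, this forces the abelianization to be a quotient of $\mathbb{Z}_2$, and the surjection to $S_n\to\mathbb{Z}_2$ pins it down. You instead feed the same Korkmaz input into the short exact sequence
\[
1\longrightarrow \mathrm{PMod}(\Sigma_{g,n}^b)\longrightarrow \mathrm{Mod}(\Sigma_{g,n}^b)\longrightarrow S_n\longrightarrow 1
\]
and use right-exactness of abelianization (which is indeed valid here: given $1\to N\to G\to Q\to 1$, one checks $\ker\bigl(G^{\mathrm{ab}}\to Q^{\mathrm{ab}}\bigr)=N[G,G]/[G,G]$, which is exactly the image of $N^{\mathrm{ab}}$). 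With $\mathrm{PMod}^{\mathrm{ab}}=0$ this identifies $\mathrm{Mod}^{\mathrm{ab}}$ with $S_n^{\mathrm{ab}}\cong\mathbb{Z}_2$ in one step. Your version is shorter and more structural; the paper's version gives the same answer but also makes explicit \emph{which} elements survive (half twists) versus die (Dehn twists, squares of half twists), information it implicitly reuses elsewhere. Both are correct; your convention caveat about item~(4) applying with both punctures and boundary is the right thing to flag, and it is indeed what Korkmaz's theorem provides.
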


\begin{proof} By the above result, and the change of coordinates principle, we see under the abelianization map of $\mathrm{Mod}(\Sigma_{g,n}^b)$, all essential Dehn twists map to the identity, and all right handed half twists map to the same element. If we consider the subsurface $\Sigma_{g}^c$ of $\Sigma_{g,n}^b$ so that almost all the additional boundary components added consists of standard loops enclosing exactly two punctures (and one containing a single puncture if $n$ is odd), we see that squares of half twists must also map to the identity in the abelianization of $\mathrm{Mod}(\Sigma_{g,n}^b)$. The result follows by noting that the abelianization cannot be trivial since we have an epimorphism from $\mathrm{Mod}(\Sigma_{g,n}^b)$ to $S_n$, and hence to $\mathbb{Z}_2$. 
\end{proof}

We now find the smallest non-trivial quotient of $\mathrm{Mod}(\Sigma_{g,n}^b)$ for $g\geq 1$, and arbitrary $n, b$.
\begin{thm} \label{D}
The smallest non-trivial quotient of $\mathrm{Mod}(\Sigma_{g,n}^b)$ of smallest order is:
\begin{enumerate}
    \item $\mathbb{Z}_2$ for $n\geq 2$ or $g\in\{1,2\}$,  and arbitrary $b$,
    \item $\mathrm{Sp}(2g,\mathbb{Z}_2)$ for $g\geq 3$ and $n\in\{0,1\}$,  and arbitrary $b$.
\end{enumerate}

\end{thm}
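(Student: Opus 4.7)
The plan is to split into the two cases of the statement and reduce each to earlier results combined with the abelianization data recalled above.

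For case (1), the goal is simply to exhibit an epimorphism onto $\mathbb{Z}_2$, since $\mathbb{Z}_2$ is the smallest non-trivial group. If $n\geq 2$, composing the standard projection $\mathrm{Mod}(\Sigma_{g,n}^b)\twoheadrightarrow S_n$ with the sign homomorphism does the job. If $g\in\{1,2\}$ and $n\leq 1$, then $\mathrm{Mod}(\Sigma_{g,n}^b)=\mathrm{PMod}(\Sigma_{g,n}^b)$, so by the Korkmaz formula listed above the abelianization is one of $\mathbb{Z}_{12}$, $\mathbb{Z}_{10}$, or $\mathbb{Z}^b$, each of which surjects onto $\mathbb{Z}_2$. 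Together these sub-cases cover every combination in case (1).

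For case (2), the existence of the $\mathrm{Sp}(2g,\mathbb{Z}_2)$ quotient comes from capping boundary components, filling the puncture if $n=1$, and then applying $\Phi$. For the lower bound I would reuse the machinery of Theorem~\ref{C}. First, I would observe that the abelianization of $\mathrm{Mod}(\Sigma_{g,n}^b)$ is trivial when $g\geq 3$ and $n\leq 1$ (using Korkmaz together with standard chain/lantern relations expressing boundary Dehn twists as commutators in genus $\geq 3$), so every non-trivial quotient is automatically non-cyclic. The totally symmetric twists $\{T_v\}$ all live in the interior of $\Sigma_{g,n}^b$, so Lemma~\ref{lemB} applies verbatim and forces the $2^{2g}-1$ classes $\overline{T_v}$ to remain distinct. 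The inductive orbit-stabilizer argument then runs in parallel with Theorem~\ref{C}: the conjugation orbit of $(\overline{R},\overline{S})$ has size at least $2^{2g-1}(2^{2g}-1)$, its stabilizer contains the image of $\mathrm{Mod}(\Sigma_{g-1,n}^{b+1})$, and after capping the new (central) boundary component and invoking the inductive form of the statement at genus $g-1$ one obtains a stabilizer lower bound of $|\mathrm{Sp}(2g-2,\mathbb{Z}_2)|$. Multiplying gives $|G|\geq |\mathrm{Sp}(2g,\mathbb{Z}_2)|$, and the equality case forces $G\cong \mathrm{Sp}(2g,\mathbb{Z}_2)$ via the same Torelli plus level-two-congruence-subgroup analysis as in Theorem~\ref{C}.

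The main obstacle is ensuring that the full inductive apparatus of Theorem~\ref{C}---triviality of the abelianization, distinctness of the twist classes, and the inductive reduction at the subsurface level---transfers cleanly to $\mathrm{Mod}(\Sigma_{g,n}^b)$. The key technical points are the chain/lantern expression for boundary Dehn twists (needed for triviality of the abelianization in the boundary case) and the verification that a puncture (when $n=1$) does not disrupt the reduction to a closed-surface statement at genus $g-1$. Since the curves $\alpha_v$ and the subsurface $\Sigma_{g-1,n}^{b+1}$ all lie away from the extra boundary and puncture, and the associated kernels are either central or already absorbed by the Torelli-level-two argument, this should be a careful but routine adaptation rather than a genuine difficulty.
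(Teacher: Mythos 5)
Your handling of case (1) matches the paper's: exhibit a $\mathbb{Z}_2$ quotient via $S_n$ and the sign map when $n\geq 2$, and via the Korkmaz abelianization when $g\in\{1,2\}$ and $n\leq 1$.

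For case (2) you take a genuinely different route. The paper's argument is a short reduction: since the abelianization of $\mathrm{PMod}(\Sigma_{g,n}^b)$ is already stated (via Korkmaz) to be trivial for $g\geq 3$ (so the chain/lantern detour you invoke is unnecessary), every non-trivial quotient is non-abelian; a minimal non-trivial quotient must therefore have trivial center (otherwise modding out by the center yields a strictly smaller non-trivial quotient), so the central boundary-parallel and puncture-surrounding Dehn twists lie in the kernel. Hence the quotient factors through $\mathrm{Mod}(\Sigma_g)$ and Theorem~\ref{C} applies verbatim. You instead propose to re-run the entire inductive orbit--stabilizer machinery directly on $\mathrm{Mod}(\Sigma_{g,n}^b)$: a re-proof of Theorem~\ref{C} in greater generality. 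This should work, but it costs more than it buys --- you must carry the punctures and boundary components through the induction, set up new base cases at low genus, and verify at each step that capping the new boundary is legitimate, all to establish a bound that the centrality argument gives in two sentences. The cleaner and intended move is to kill the central twists first and then cite the closed-surface theorem.
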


\begin{proof}%[Proof of Theorem~\ref{D}]
For $n\geq 2$, we get an epimorphism $\mathrm{Mod}(\Sigma_{g,n}^b)\rightarrow S_n$ by considering the action on the punctures, and we can further quotient to the unique smallest non-trivial group $\mathbb{Z}_2$.
Thus it only remains to consider $n\in \{0,1\}$, and so all mapping classes are pure. From the aforementioned result about abelianization, we see that for $g\in\{1,2\}$ the smallest non-trivial quotient is $\mathbb{Z}_2$. Also, the same result tell us that for $g \geq 3$ there can be no non-trivial abelian quotients. Hence thus all boundary parallel and  puncture surrounding Dehn twists (which are central) must map to the identity under any non-trivial quotient of smallest order (otherwise we get an even smaller non-trivial quotient by quotienting by the center), and thus we reduce to the case in Theorem~\ref{C}.
\end{proof}

We also find the smallest non-cyclic quotient of  $\mathrm{Mod}(\Sigma_{g,n}^b)$ for a wide range of cases.
\begin{thm} \label{E} Any non-cyclic quotient of $\mathrm{Mod}(\Sigma_{g,n}^b)$ of smallest order is:
\begin{enumerate}
\item  the smaller of the groups among $S_n$ and $\mathrm{Sp}(2g,\mathbb{Z}_2)$ for $g\geq 3$, $n\geq 5$ and arbitrary $b$,

\item $S_3$ for  $g\geq 3$, $n\in \{3,4\}$ and arbitrary $b$,

\item $\mathrm{Sp}(2g,\mathbb{Z}_2)$ for $g\geq 2$,  $n\in \{0,1\}$, and arbitrary $b$ (also for $g=1$, and  $n,b\in \{0,1\}$),

\item $\mathbb{Z}_2\oplus \mathbb{Z}_2$ for $g=1$, $n\in \{0,1\}$, and $b\geq 2$.
\end{enumerate}

\end{thm}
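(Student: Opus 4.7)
The plan is to handle the four cases by combining explicit upper-bound constructions with a lower-bound argument based on a central-reduction technique and the earlier theorems of the paper.

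For the upper bounds, I would exhibit the claimed quotient in each case. Cases (1)--(3) use the standard projections $\pi:\mathrm{Mod}(\Sigma_{g,n}^b)\to S_n$ (possibly composed with the surjection $S_4\twoheadrightarrow S_4/V\cong S_3$ by the Klein four subgroup $V$, when $n=4$) and $\Phi:\mathrm{Mod}(\Sigma_{g,n}^b)\to \mathrm{Sp}(2g,\mathbb{Z}_2)$; note $\mathrm{Sp}(2,\mathbb{Z}_2)\cong S_3$ handles $g=1$ in Case~(3). For Case~(4), Korkmaz's computation (applicable since $n\leq 1$ forces $\mathrm{Mod}=\mathrm{PMod}$) gives the abelianization $\mathbb{Z}^b$, which for $b\geq 2$ surjects onto $\mathbb{Z}_2\oplus\mathbb{Z}_2$, the smallest non-cyclic group.

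For the lower bounds in Cases (1)--(3), given a non-cyclic quotient $q:\mathrm{Mod}(\Sigma_{g,n}^b)\to H$ of minimal order, I would use a \emph{central reduction}: boundary-parallel Dehn twists are central in $\mathrm{Mod}(\Sigma_{g,n}^b)$, so their image $Z$ lies in $Z(H)$. By minimality, either $Z=1$ or $H/Z$ is cyclic. In the latter case, $H$ is generated by $Z$ together with a single element, hence abelian; then Lemma~\ref{llab} (for $g\geq 3,\, n\geq 2$) or the quoted Korkmaz computation (for $g\leq 2$ or $n\leq 1$) shows the relevant abelianization is cyclic, contradicting non-cyclicity of $H$. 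Thus $q$ kills the boundary twists and factors through $\mathrm{Mod}(\Sigma_{g,n})$ via capping. After this reduction, Case~(4) is immediate; Case~(2) uses Lemma~\ref{llab} to rule out non-cyclic abelian quotients, forcing $|H|\geq 6=|S_3|$; and Case~(3), with $n\in\{0,1\}$, further reduces via the Birman exact sequence (filling the puncture when $n=1$) to a non-cyclic quotient of $\mathrm{Mod}(\Sigma_g)$, where Theorem~\ref{C} (or Claim~\ref{c34} for $g=1$) gives $|H|\geq|\mathrm{Sp}(2g,\mathbb{Z}_2)|$ with equality characterizing $H$.

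For Case~(1), I would analyze the image $P:=q(\mathrm{PMod}(\Sigma_{g,n}))$, which is normal in $H$ with $H/P$ a quotient of $S_n$, hence (for $n\geq 5$) one of $\{1,\mathbb{Z}_2,S_n\}$. If $H/P\cong S_n$ then $|H|\geq n!$ with equality forcing $P=1$ and $H\cong S_n$. Otherwise $P$ has index at most $2$ in $H$ and itself induces a non-cyclic quotient of $\mathrm{Mod}(\Sigma_g)$ via filling the punctures, so Theorem~\ref{C} yields $|P|\geq |\mathrm{Sp}(2g,\mathbb{Z}_2)|$ and hence $|H|\geq |\mathrm{Sp}(2g,\mathbb{Z}_2)|$. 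The characterization of the minimal quotient map in each extremal case follows from the corresponding uniqueness statements in Theorems~\ref{A} and~\ref{C}. The main obstacle is verifying, in Case~(1) and in Case~(3) with $n=1$, that the relevant image remains non-cyclic after filling punctures, i.e.\ that $P$ does not collapse to a cyclic subgroup under $\mathrm{PMod}(\Sigma_{g,n})\to\mathrm{Mod}(\Sigma_g)$. This follows by noting that the $2^{2g}-1$ Dehn twists $T_{\alpha_v}$ from Lemma~\ref{lemB} can be chosen along curves disjoint from all punctures, so their distinctness in the image is preserved by the filling map and the hypotheses of Theorem~\ref{C} are met.
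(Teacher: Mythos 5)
Your upper-bound constructions and the central-reduction step to pass from $b > 0$ to $b = 0$ via boundary twists are sound, and Cases (2) and (4) are handled correctly. The genuine gap is in Case~(1) and in Case~(3) with $n=1$, where you attempt to reduce a quotient of $\mathrm{PMod}(\Sigma_{g,n})$ (respectively of $\mathrm{Mod}(\Sigma_{g,1})$) to a quotient of $\mathrm{Mod}(\Sigma_g)$ ``via filling the punctures.'' This goes the wrong way: the forgetful map $\mathrm{PMod}(\Sigma_{g,n}) \twoheadrightarrow \mathrm{Mod}(\Sigma_g)$ (and likewise the map in the Birman exact sequence) is a surjection from the group you are quotienting, so a given quotient $q$ of $\mathrm{PMod}(\Sigma_{g,n})$ does \emph{not} factor through $\mathrm{Mod}(\Sigma_g)$ unless one separately shows that the point-pushing kernel lies in $\ker q$. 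Your proposed repair --- that the curves $\alpha_v$ can be taken disjoint from the punctures so ``distinctness is preserved by the filling map'' --- does not address this: distinctness of the classes $\overline{T_{\alpha_v}}$ is a property of $q$ and does not make $q$ descend to $\mathrm{Mod}(\Sigma_g)$, which is what you would need to invoke Theorem~\ref{C}. You also have not ruled out that $P = q(\mathrm{PMod}(\Sigma_{g,n}))$ is cyclic (a cyclic normal subgroup of index $2$ in a non-cyclic $H$ is a priori possible), so the hypothesis of Theorem~\ref{C} is not established even granting the reduction.

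The paper avoids both problems by working with actual subgroups and with the capping \emph{surjection} $\mathrm{Mod}(\Sigma_g^{b+1}) \twoheadrightarrow \mathrm{Mod}(\Sigma_{g,1}^b)$, and crucially it uses Theorem~\ref{D} (the smallest \emph{non-trivial} quotient for surfaces with boundary), which you never invoke. Concretely, for $g \geq 3$, $n \geq 5$, $b=0$ the paper restricts $q$ to the subsurface subgroups $\mathrm{Mod}(\Sigma_g^1)$ and $B_n \cong \mathrm{Mod}(\Sigma_{0,n}^1)$ of $\mathrm{Mod}(\Sigma_{g,n})$; if both restrictions are cyclic, Theorem~\ref{D} forces the image of $\mathrm{Mod}(\Sigma_g^1)$ to be trivial, and a short argument shows the image of every half-twist is a non-trivial central element, contradicting the triviality of the center established from Lemma~\ref{llab}. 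Otherwise one of the two restrictions is non-cyclic and Theorems~\ref{A} and~\ref{D} give the bound directly. For $n=1$, precomposing with the capping surjection exhibits the quotient as a quotient of $\mathrm{Mod}(\Sigma_g^{b+1})$, to which the $n=0$ case of Theorem~\ref{D} applies; no appeal to the Birman exact sequence is needed. To salvage your route you would need to replace ``fill the punctures'' with a restriction to a subsurface subgroup or a precomposition with a capping surjection, and to bring in Theorem~\ref{D} (or reprove its content for $\mathrm{Mod}(\Sigma_g^1)$) where you currently cite Theorem~\ref{C}.
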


\begin{proof}%[Proof of Theorem~\ref{E}]
Let us consider the center of a non-cyclic quotient of $\mathrm{Mod}(\Sigma_{g,n}^b)$ of smallest order. The only way this center is non-trivial is if the quotient is non-cyclic abelian (otherwise we get a strictly smaller non-cyclic quotient). This situation does happen for $g=1$, $n\in \{0,1\}$, and $b\geq 2$, where the abelianization of $\mathrm{Mod}(\Sigma_{g,n}^b)$ is $\mathbb{Z}^b$, which has the Klein four group (the unique non-cyclic group of smallest order) as a quotient.

Also, the above is the only case (among the ones mentioned in the statement) where this can happen, since the abelianization of $\mathrm{Mod}(\Sigma_{g,n}^b)$ is $\mathbb{Z}_2$ for $g\geq 3$ and $n\geq 2$, and $\mathbb{Z}/{10\mathbb{Z}}$ for $g=2$ and $n\in \{0,1\}$. Thus for these cases, the smallest non-cyclic quotient must necessarily be non abelian. Moreover, all boundary parallel Dehn twist must map to the trivial element in the quotient, and so we reduce to the case $b=0$ (and if $n=1$, the Dehn twist about the curve surrounding the puncture is also central, so we can also reduce to the case $n=0$). Hence for $g\geq 2$,  $n\in \{0,1\}$, and arbitrary $b$ (and also for $g=1$, and  $n,b\in \{0,1\}$) we reduce to the case $n=b=0$, and we get the desired result by Theorem ~\ref{C}.

 In case $n\in \{3,4\}$, and $g\geq 3$, we see that $S_3$ is a quotient of $\mathrm{Mod}(\Sigma_{g,n}^b)$ (using the induced action on the punctures and the exceptional homomorphism $S_4\rightarrow S_3$. As we saw earlier, $S_3$ must be the smallest quotient in these case as it is the unique smallest non-abelian group.

Finally, we now consider the case $g\geq 3$, $n\geq 5$ and $b=0$. Suppose  we have a quotient of $\mathrm{Mod}(\Sigma_{g,n})$, so that the restriction to both $\mathrm{Mod}(\Sigma_{g}^1)$ and $B_n\cong \mathrm{Mod}(\Sigma_{0,n}^1)$ are both cyclic. Then by Theorem~\ref{D} it must be the case that image of $\mathrm{Mod}(\Sigma_{g}^1)$ is trivial. Moreover by the braid relation, all half twists in $\mathrm{Mod}(\Sigma_{g,n})$ must map to a single element. Given any Dehn twist in $\mathrm{Mod}(\Sigma_{g,n})$, by a change of coordinate we can find a half twist commuting with it. So we see that the image of all half twists is a central element in the quotient, as $\mathrm{Mod}(\Sigma_{g,n})$ is generated by Dehn twists and half twists. This contradicts our observation earlier, so one of the restrictions to $\mathrm{Mod}(\Sigma_{g}^1)$ or $B_n$ is non-cyclic, giving us the desired result by using Theorems~\ref{A} and \ref{D}.\end{proof}

To complete the proof of Theorem~\ref{Cb}, it remains to verify the statement about maps. However, let us note that, the corresponding statement is not true for all the cases in Theorem~\ref{E}. For instance for $b\geq 3$ there are multiple epimorphisms from  $\mathrm{Mod}(\Sigma_{1}^b)$ to $\mathbb{Z}_2\oplus\mathbb{Z}_2$, even up to postcomposing by automorphisms of the image.

\begin{proof} [Proof of Theorem~\ref{Cb}]

For $g\geq 3$ and $n\geq 5$, let us first consider the case that the quotient of $\mathrm{Mod}(\Sigma_{g,n}^b)$ of smallest order is $S_n$. We know from the proof of Theorem~\ref{E} that
we can reduce to the case $b=0$, and the restriction of this quotient on  $\mathrm{Mod}(\Sigma_{0,n}^1)$ is $S_n$ as well. Since
$\mathrm{Mod}(\Sigma_{0,n}^1)$ commutes with $\mathrm{Mod}(\Sigma_{g}^1)$, and $S_n$ is centerless, it follows that $\mathrm{Mod}(\Sigma_{g}^1)$ is in the kernel of this quotient map. As all Dehn twists in $\mathrm{Mod}(\Sigma_{g,n})$ are conjugate, it follows that the kernel contains the pure mapping class group
$\mathrm{PMod}(\Sigma_{g,n})$. Consequently, the quotient map factors through $\mathrm{Mod}(\Sigma_{g,n})/\mathrm{PMod}(\Sigma_{g,n})\cong S_n$, and the desired result follows.

For $g\geq 3$ and $n\geq 5$, let us now consider the case that the quotient of  $\mathrm{Mod}(\Sigma_{g,n}^b)$ of smallest order is $\mathrm{Sp}(2g,\mathbb{Z}_2)$. Similar to our above discussion, we see that the quotient map restricted to $\mathrm{Mod}(\Sigma_{g}^1)$ is surjective, and all half twists are in the kernel of the quotient map. We know the epimorphism from $\mathrm{Mod}(\Sigma_{g}^1)$
to $\mathrm{Sp}(2g,\mathbb{Z}_2)$ has to send the boundary parallel Dehn twist to identity, and so it factors through $\mathrm{Mod}(\Sigma_{g})$. By Theorem~\ref{C}, we know this map is the standard projection $\Phi$, up to an automorphism $h$ of $\mathrm{Sp}(2g,\mathbb{Z}_2)$. By looking at the action on $\mathbb{Z}_2^{2g}$, we see that all the Dehn twists about curves not contained in $\Sigma_{g}^1$ (next to the punctures) in \cite[Figure 4.10]{farb2012primer} must map to the same element. Since we also know that all half twists are in the kernel of the quotient map $\mathrm{Mod}(\Sigma_{g,n}^b)\rightarrow \mathrm{Sp}(2g,\mathbb{Z}_2) $, it follows that this map coincides with the standard projection, postcomposed with the same automorphism $h$ of $\mathrm{Sp}(2g,\mathbb{Z}_2)$.

For $g\geq 3$ and $n\in\{0,1\}$, the result follows by the same argument in the last paragraph. Lastly, for $g\geq 3$ and $n\geq 2$, any homomorphism from $\mathrm{Mod}(\Sigma_{g,n}^b)$ to an abelian group must factor through the abelianization of $\mathrm{Mod}(\Sigma_{1}^b)$, which by Lemma~\ref{llab} is $\mathbb{Z}_2$. Hence the result follows, and moreover this map is unique since $\mathbb{Z}_2$ does not have a non-trivial automorphism.\end{proof}

\bibliographystyle{plain}
\bibliography{references}

\end{document}